\definecolor{bluish-green}{HTML}{009E73}
\definecolor{reddish-purple}{HTML}{C22E99}
\definecolor{vermilion}{HTML}{C52E00}
\definecolor{nice-blue}{HTML}{0052A2}
\definecolor{nice-orange}{HTML}{E69F00}
\definecolor{marroon}{HTML}{800000}
\newtheorem{theorem}{Theorem}[section]
\newtheorem{conjecture}[theorem]{Conjecture}
\newtheorem{lemma}[theorem]{Lemma}
\newtheorem{defn}[theorem]{Definition}
\newtheorem{proposition}[theorem]{Proposition}
\newtheorem{corollary}[theorem]{Corollary}
\theoremstyle{definition}
\newcounter{tenumerate}
\newcommand{\rk}{\mathrm{rk}}
\renewcommand{\epsilon}{\varepsilon}
\newcommand{\eps}{\varepsilon}
\renewcommand{\dim}{\mathsf{dim}}
\newcommand{\R}{{\mathbb R}}
\newcommand{\F}{{\mathbb F}}
\newcommand{\remove}[1]{}
\renewcommand{\leq}{\leqslant}
\renewcommand{\geq}{\geqslant}
\def\XXint#1#2#3{{\setbox0=\hbox{$#1{#2#3}{\int}$}
\vcenter{\hbox{$#2#3$}}\kern-.5\wd0}}
\renewcommand{\H}{\mathcal{H}}
\newcommand{\edc}{\mathsf{E}^{d,C}}
\newcommand{\eds}{ \mathsf{E}^{d,S}}
\newcommand{\edt}{\mathsf{E}^{d,T}}
\newcommand{\sstr}{\mathsf{sstr}}
\newcommand{\str}{\mathsf{str}}
\newcommand{\vc}{\mathsf{VC}}
\newcommand{\sign}{\mathsf{sign}}
\newcommand{\setof}[1]{#1}
\renewcommand{\int}{\mathsf{intdeg}}
\newcommand{\intdeg}{\mathsf{intdeg}}
\newcommand{\bitsz}{\{0,1\}^n}
\begin{document}
	\title{Shattered Sets and the Hilbert Function} 
	\author{
		Shay Moran\thanks{Google AI, Princeton. \url{shaymoran1@gmail.com}} 
		\and  
		Cyrus Rashtchian\thanks{University of California, San Diego. \url{crashtchian@eng.ucsd.edu}}
	}
	\date{\today}
	\maketitle

\begin{abstract} 
We study complexity measures on subsets of the boolean hypercube and exhibit connections between algebra (the Hilbert function) and combinatorics (VC theory). These connections yield results in both directions.   Our main complexity-theoretic result proves  that many linear program feasibility problems cannot be computed by polynomial-sized constant-depth circuits. Moreover, our result applies to a stronger regime in which the hyperplanes are fixed and only the directions of the inequalities are given as input to the circuit. We derive this result by proving that a rich class of extremal functions in VC theory cannot be approximated by low-degree polynomials. We also present applications of algebra to combinatorics.  We provide a new algebraic proof of the Sandwich Theorem, which is a generalization of the well-known Sauer-Perles-Shelah Lemma. 
Finally, we prove a structural result about downward-closed sets, related to the Chv\'{a}tal  conjecture in extremal combinatorics.  
\end{abstract}

\section{Introduction} 
Understanding  the properties and structure of subsets of the boolean hypercube is a central theme in theoretical computer science and combinatorics.   When studying a family of mathematical objects, endowing the objects with algebraic structure often sheds new light on interesting properties.  This phenomena appears classically in areas such as algebraic topology and algebraic geometry.  It also provides much utility when studying the boolean hypercube.  
Let $C\subseteq \{0,1\}^n$ and $\F$ be a field. Consider the linear space of functions from $C$ to $\F$, that is,  $\F^{C}$.  This is clearly a $|C|$-dimensional vector space over $\F$. Every function in this space can be represented as a multilinear polynomial with degree at most $n$. Interestingly, for certain sets $C$, smaller degree actually suffices. For example, when $C$ is the standard basis, denoted $C=\{\vec e_1,\ldots,\vec e_m\}$, then any function $f:C\rightarrow\F$ can be expressed as the linear function $f(\vec e_1)x_1+\ldots +f(\vec e_m)x_m$.   In other words, degree one suffices. 

The Hilbert function, denoted  $h_d(C,\F)$, is the dimension of the space of functions $\{f:C\to \F\}$ that have representations as polynomials with degree at most $d$.  
This classical algebraic object will be useful in our study of how the structure of $C$ affects the function space.
In complexity theory, Smolensky~\cite{smolensky} has used the Hilbert function to unify 
polynomial approximation lower bounds relating to bounded-depth circuits.   

We establish new connections between the Hilbert function and VC theory. VC theory lies in the intersection of statistics and combinatorics and has connections to many areas of computer science and mathematics, such as discrete geometry, machine learning, and complexity theory. 

The main technical contribution of this paper is an upper and a lower bound on $h_d(C,\F)$ in terms of 
basic concepts in VC theory, such as shattering, strong shattering, and down-shifts.  This connection between Algebra and Combinatorics allows us to derive results in both directions.
Our main complexity theoretical application is that determining feasibility of a large family of linear programs is hard for the class of bounded-depth circuits.  
More specifically, let $h_1,\ldots,h_m$ be affine functions. Each sign vector $s$ in $\{\pm\}^m$ defines the following feasibility problem: does there exist $x\in \R^d$ such that $h_i(x) > 0$ when $s_i=+$, and $h_i(x) < 0$ when $s_i=-$, for all $i \in [m]$?  This defines a boolean function that takes an input $s$ and outputs one if and only if the problem is feasible.  We prove that if $m=2d+1$, and the affine functions $h_i$ are in general position, then computing this function requires a constant-depth circuit of exponential size.
We do so by proving that this function cannot be approximated by low-degree polynomials, over any field.  The circuit lower bound then follows from the polynomial approximation technique introduced by Razborov~\cite{razborov} and Smolensky~\cite{smolensky87}.  

As a combinatorial application of our bounds on the Hilbert function, we  provide a short algebraic proof of the Sandwich Theorem.  This theorem comes from VC theory and is a well-studied generalization of the Sauer-Shelah-Perles Lemma~\cite{Law, pajor,BR89, br, dress,ShatNews, Dress2,moran_thesis,KozmaM13,MeszarosR14,MoranW15}.   

Facts we prove about the function space $\F^C$ also lead to a new result about downward-closed sets.  A family~$D$ of subsets is {\em downward-closed} if $b \subseteq a$ and $a \in D$ implies $b \in D$.   A theorem of Berge~\cite{berge} implies that for any downward-closed set $D$  there exists a bijection $\pi: D \to D$ such that $a \cap \pi(a) = \emptyset$ for all $a \in D$. We generalize his result to arbitrary, prescribed intersections.  Let $\phi : D \to D$ have the property $\phi(a) \subseteq a$ for all $a \in D$.  We show that there always exists a bijection $\pi: D \to D$ such that $a \cap \pi(a) = \phi(a)$.   Note that choosing $\phi(a) = \emptyset$ for all $a$ implies Berge's result.

Our algebra-combinatorics connection fits within the framework of the polynomial method. This method has been successful in providing elegant proofs of fundamental results in many areas, such as circuit complexity~\cite{smolensky87, abfr, razborov, beigel}, discrete geometry~\cite{guthkatz, dvir, sharir, tao}, extremal combinatorics~\cite{alon-null, jukna-extremal, babai-frankl}, and more.   

The rest of the paper is organized as follows.  We state our main theorems in Section~\ref{section:results}.  In Section~\ref{section:hilbert}, we prove our bounds on the Hilbert function.   In Section~\ref{section:lp}, we use our Hilbert function bounds to prove that linear program feasibility is hard for bounded-depth circuits.  Finally, in Section~\ref{section:downward-closed}, we prove results about downward-closed sets.   We now review preliminaries.

\subsection{Preliminaries}  
We begin with algebraic preliminaries.   Let $C \subseteq \{0,1\}^n$ and $\F$ be a field.
Every $f : C \to \F$ can be expressed as a multilinear polynomial over variables $x_1,\ldots,x_n$ with coefficients in~$\F$.   

\begin{defn} For $d \in [n]$ the {\em Hilbert function} $h_d(C,\F)$ is the dimension of the space of functions $f: C \to \F$ that can be represented as polynomials with degree at most $d$. 
\end{defn}
\noindent
Notice that $h_d(C,\F) \leq \min\{\sum_{j=0}^d {n \choose j}, |C|\}$.  
A basic fact about the Hilbert function is that 
\[ 1 = h_0(C,\F) \leq h_1(C,\F) \leq \ldots \leq h_n(C,\F) =  |C|.\]
The final equality holds because all $f:C\to \F$ have representations with degree at most $n$.

It is natural to wonder when the Hilbert function attains its maximum and how the structure of $C$ influences the Hilbert function.  
We introduce the following as a complexity measure on sets. 
\begin{defn}
The {\bf interpolation degree of $C$} denoted $\int(C)$ is the minimum $d$ such that any $f:C \to \F$ can be expressed as a multilinear polynomial with degree at most $d$.  In other words, 
\[ \int(C) = \min \{ d \in [n]  :  h_d(C,\F) = |C|\}. \]
\end{defn}
\noindent
Intuitively, a smaller interpolation degree implies a less complex subset of the boolean hypercube. 

We move on to combinatorial preliminaries.  Our bounds on the Hilbert function use basic concepts in VC theory.  We define these concepts now.  

\begin{defn} A subset $I\subseteq [n]$ is {\bf shattered} by $C \subseteq \{0,1\}^n$ if for every pattern $s:I\rightarrow\{0,1\}$ there is $c\in C$ that realizes $s$. In other words, the restriction of $c$ to $I$ equals $s$. 
A subset $I\subseteq [n]$ is {\bf strongly shattered} by $C$ if $C$ contains all elements of some subcube on $I$. In other words, there exists a pattern $\bar s:([n] \setminus I)\rightarrow\{0,1\}$ such that all extensions of $\bar s$ to a vector
in $\{0,1\}^{n}$ are in $C$. 
\end{defn}
\noindent
These definitions lead to natural families of sets, which will be important to our work. 

\begin{defn}
The {\bf shattered sets} with respect to $C$ are 
\[ \str(C) = \{I\subseteq [n]:I\mbox{ is shattered by }C\}.\]
The {\bf strongly shattered sets} with respect to $C$ are  
\[ \sstr(C) = \{I\subseteq [n]:I\mbox{ is strongly shattered by }C\}.\]
\end{defn}

\begin{defn}
The {\bf VC dimension} of $C$ is defined as $ \vc(C) = \max \{|I| : I \in \str(C) \}$.
\end{defn}
\noindent
Note that $\sstr(C)\subseteq\str(C)$ and that both of these families are downward-closed. 

We also lower bound the Hilbert function using down-shifts, a  standard tool in extremal combinatorics.   
Let $C\subseteq\{0,1\}^n$ and let $i\in [n]$. We denote as $S_i$ the down-shift operator
on the $i$'th coordinate.  Obtain the set 
$S_i(C)\subseteq\{0,1\}^n$ from $C$ as follows.  Replace every $c\in C$ such that both (i) $c_i=1$, and (ii) the $i$'th neighbor\footnote{Vectors $u,v\in\{0,1\}^n$ are {\em $i$'th-neighbors} if they differ in coordinate $i$ and are the same elsewhere.} of $c$ is not in $C$ with the $i$'th neighbor of $c$.    Authors have referred to this operation as ``compression'', ``switching'', and ``polarization'. Previous works that uses down-shifts include  \cite{Kleitman1966,Enflo70,BollobasL96,GoldreichGLRS00,GreenT09,moran_thesis,Odonnelbook}.


The most important property of down-shifts for our results is that they transform an arbitrary subset of $\{0,1\}^n$ into a downward-closed set, without changing cardinality. Specifically, if \[D=S_{n}(S_{n-1}(\ldots S_1(C)))\] is the result of sequentially applying $S_i$ on $C$ for each $i$, then $\setof{D}$ is downward-closed. 
(It is convenient in this context to think of $D$ as a family of subsets of $[n]$ rather than a set of boolean vectors via the natural correspondence between boolean vectors and sets.)

We move on to explaining our results in more formality and detail.


\section{Our Results}  \label{section:results}
We start with the result about linear program feasibility.  We then state the bounds on the Hilbert function in terms of shattered sets and down-shifts.  We show this leads to bounded-depth circuit lower bounds.  Finally, we state two combinatorial applications. 
\subsection{Linear Program Feasibility}\label{sec:result_lp}

We formalize and prove a strong version of the statement ``linear programming feasibility can not be decided by polynomial-sized, constant-depth circuits.''  Clearly, linear programming being P-complete~\cite{dobkin} implies a version of this statement for specific linear programs representing functions previously known not to have efficient bounded-depth circuits.  
We prove a stronger version stating that any linear feasibility problem, in which the number of constraints is roughly twice the number of variables and the constraints are non-degenerate, cannot be decided by an efficient bounded-depth circuit.   
For a set of hyperplanes $\mathcal{H}$ in $\R^k$ we will define a boolean function $f_\mathcal{H}$.  It takes orientations as inputs and outputs one if and only if a certain polytope is nonempty.  In particular, we establish hardness of this problem even when the hyperplanes are fixed in advance and only the orientations are given as input.   


We express linear program feasibility as a boolean function as follows.
Specify an arrangement of $m$ hyperplanes $\mathcal{H}=\{h_1,\ldots,h_m\}$ with normal vectors $\vec n_i$ and translation scalars $b_i$ as 
$$h_i=\{\vec{x}:\langle\vec{n_i},\vec{x}\rangle=b_i\}.$$
\noindent
A sign pattern $s\in\{- 1,1\}^m$ encodes the following linear programming feasibility problem:   
\begin{center}
{\em Does there exist $x\in\R^k$ satisfying $\sign(\langle x,n_i\rangle - b_i) = s_i$ for all $i \in [m]$?}
\end{center}

This corresponds to checking the feasibility of a linear program with $m$ constraints and $k$ variables.  Define $f_\mathcal{H}:\{- 1,1\}^m\rightarrow\{0,1\}$ as the boolean function such that $f_\mathcal{H}(s)=1$ if and only if the linear program encoded by $s$ is feasible.

As an example, consider the following arrangement in $\R^2$.  The three hyperplanes
\begin{align*}
&h_1 = \{(x_1,x_2):5x_1+3x_2=3\}, h_2=\{(x_1,x_2):8x_1-x_2=8\},
h_3=\{(x_1,x_2):4x_1-5x_2=0\}
\end{align*}
form an arrangement of three lines in the plane. 
The vector $s=(+1,-1,+1)$ encodes the system
\begin{align*}
&5x_1+3x_2>3 \tag{$s(1)=+1$}\\
&8x_1-x_2<8\tag{$s(2)=-1$}\\
&4x_1+5x_2>0\tag{$s(3)=+1$}
\end{align*}
\noindent
In the  example, the system encoded by $(+1,-1,+1)$ is not satisfiable (see Figure~\ref{fig:arrangement}).

\begin{figure}[t]
\begin{center} 
\begin{tikzpicture}[scale=1.5]

\draw[-,thick] (0,3) -- (4,0.6);
\node at  (-1,3.2)
{\footnotesize $h_1:3x_1+5x_2=3$};

\draw[-,thick] (1,0) -- (1.5,3.7);
\node at  (1.5,3.8)
{\footnotesize $h_2:8x_1-x_2=8$};

\draw[-,thick] (-0.2,0) -- (4,3);
\node at  (4.6,3.2)
{\footnotesize $h_3:4x_1-5x_2=0$};

\node at  (3.5,2)
{\footnotesize $+++$};
\node at  (2.5,3)
{\footnotesize $++-$};
\node at  (1.58,1.65)
{\footnotesize $-+-$};
\node at  (2.25,0.75)
{\footnotesize $-++$};
\node at  (0.65,0.25)
{\footnotesize $--+$};
\node at  (0.6,1.5)
{\footnotesize $---$};
\node at  (0.75,3)
{\footnotesize $+--$};
\end{tikzpicture}
\end{center}
\caption{Three lines divide $\mathbb{R}^2$ into seven regions, each labeled by a feasible sign pattern.} \label{fig:arrangement}
\end{figure}
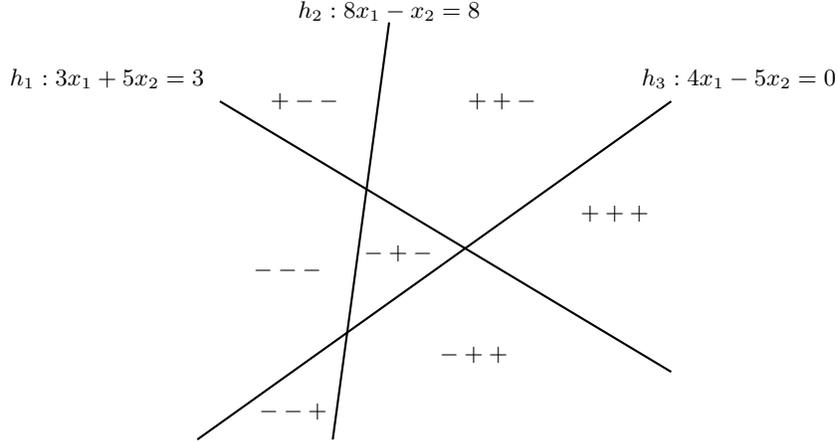


We prove the following theorem.
\begin{theorem}\label{thm:hyperplanes}
Let $\mathcal{H}$ be an arrangement of $2k+1$ hyperplanes in $\R^k$ that are in general position.  Any $AC^0[p]$ circuit, for a prime $p$, with depth $d$ computing $f_\H$ requires {$\exp(\Omega(k^{1/2d}))$} gates. 
\end{theorem}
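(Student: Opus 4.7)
The plan is to apply the Razborov--Smolensky polynomial approximation method. Their classical approximation lemma converts a depth-$d$ $AC^0[p]$ circuit of size $s$ computing $f_\H$ into a polynomial $P\in\F_p[x_1,\ldots,x_{2k+1}]$ of degree $(O(\log s))^d$ that agrees with $f_\H$ on at least a $2/3$-fraction of inputs. Thus it suffices to prove an approximation lower bound: no polynomial over $\F_p$ of degree $o(k^{1/2})$ approximates $f_\H$ to small constant error. Substituting such a bound into the circuit/approximation trade-off then yields the claimed $s \ge \exp(\Omega(k^{1/2d}))$.

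The first substantive step is to identify the combinatorial structure of the feasible set $C \df f_\H^{-1}(1)\subseteq\{-1,1\}^{2k+1}$. Classical Zaslavsky theory for arrangements in general position gives $|C|=\sum_{i=0}^k \binom{2k+1}{i}=2^{2k}$, saturating the Sauer--Shelah bound for VC dimension $k$, so $C$ is a maximum class. Moreover, general position implies that every $I\in\binom{[2k+1]}{k}$ is strongly shattered by $C$: the $k$ hyperplanes indexed by $I$ meet in a single point $p_I$, and in a small ball around $p_I$ the remaining $k+1$ hyperplanes take a fixed sign pattern while the $I$-coordinates realize every pattern in $\{-1,1\}^I$. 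Plugging into the Hilbert function lower bounds of Section~\ref{section:hilbert}, this yields $\sstr(C)\supseteq\binom{[2k+1]}{\le k}$ and forces $h_k(C,\F)=|C|=2^{2k}$, so $\intdeg(C,\F)=k$ over every field $\F$.

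With $\intdeg(C,\F_p)=k$ in hand, the approximation lower bound follows a Smolensky-style dimension count. Assume a degree-$d$ polynomial $P$ over $\F_p$ agrees with $f_\H$ outside an error set $E$ of size $\epsilon\cdot 2^{2k+1}$. Every function $g:C\to\F_p$ admits a degree-$\le k$ extension $\tilde g$ to $\{-1,1\}^{2k+1}$. The product $\tilde g\cdot P$ is a degree-$\le (d+k)$ polynomial that equals $g$ on $C\setminus E$ and vanishes on $\bar C\setminus E$. The map $g\mapsto \tilde g\cdot P$ therefore embeds $\F_p^C$ into degree-$(d+k)$ polynomials, up to an error supported on $E$; comparing the source dimension $|C|=2^{2k}$ with the target dimension $\sum_{i\le d+k}\binom{2k+1}{i}$ and invoking binomial-tail estimates around the central term $\binom{2k+1}{k}$ then forces $d=\Omega(\sqrt k)$.

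The main obstacle is making this last dimension count rigorous against the approximation error: the map $g\mapsto \tilde g\cdot P$ is strictly injective only when $P=\mathbf 1_C$ exactly on $C$, whereas in the approximate setting the $\epsilon 2^{2k+1}$ disagreement points of $E$ could in principle collapse the image. Smolensky's template for $\mathrm{MOD}_q$ absorbs analogous errors via an algebraic-closure majority trick; the natural substitute here is to exploit the saturation $|\sstr(C)|=|C|$ to choose an interpolation basis for $\F_p^C$ indexed by the strongly shattered $k$-subcubes, so that each error point affects only a bounded number of basis functions and the binomial tail bound survives the perturbation.
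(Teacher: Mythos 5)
Your reduction to a polynomial-approximation lower bound and your combinatorial analysis of $C=f_\H^{-1}(1)$ match the paper exactly: the Zaslavsky count $|C|=\sum_{i\le k}\binom{2k+1}{i}=2^{2k}$, the strong shattering of every $I$ with $|I|\le k$ via the vertex $p_I$ of the arrangement, and the consequence $\intdeg(C,\F)=k$ are all correct and are all used in the paper (cf.\ Proposition~\ref{thm:hyp-extremal} and Appendix~\ref{app:hyplerplanes}). The gap is in the last step, and you have already put your finger on it: your dimension count does not close. Concretely, the map $g\mapsto\tilde g\cdot P$ lands in degree-$(d+k)$ polynomials on $\{-1,1\}^{2k+1}$, a space of dimension $\sum_{i\le d+k}\binom{2k+1}{i}$, which already exceeds $|C|=2^{2k}$ the moment $\deg(P)\ge 1$; so comparing source and target dimensions yields no contradiction at all, even before one worries about the error set $E$. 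Your proposed repair --- ``each error point affects only a bounded number of basis functions'' --- does not hold: the basis monomials $\prod_{i\in I}x_i$ with $I\in\sstr(C)$ are global, not locally supported, so a single corrupted evaluation can perturb every coordinate in that basis. (There is also a notational clash: you reuse $d$ both for circuit depth and for the degree of $P$, which makes the final ``forces $d=\Omega(\sqrt k)$'' ambiguous.)

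What the paper does instead, and what you should do, is invoke Smolensky's approximation bound (Theorem~\ref{smolensky}, proved in Appendix~\ref{app:smolensky}) rather than re-deriving it. Smolensky's argument is not a source-vs-target dimension count on $C$; it is a two-part rank argument. Lemma~\ref{symmetric-difference} gives the subadditivity $|S\setminus T|\ge h_d(S,\F)-h_d(T,\F)$, which is how the error set enters. Lemma~\ref{lemma2} is the crucial step you are missing: for the support $P$ of a low-degree polynomial, $h_{d'}(P,\F)\le|P|/2$ whenever $d'<(n-\deg(p))/2$, proved by producing two degree-$d'$ polynomials $q_1,q_2$ such that $p\,q_1q_2$ is a nonzero point indicator, hence has degree at least $n$, a contradiction. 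Combining these with your (correct) computation that $h_d(C,\F)=\sum_{i\le d}\binom{2k+1}{i}$ for $d\le k$ is exactly Corollary~\ref{sstr-bound}, which yields $\Pr[p\neq f_\H]\ge\tfrac12-10\deg(p)/\sqrt{2k+1}$ and hence, via Razborov--Smolensky, the stated $\exp(\Omega(k^{1/2d}))$ size bound. So: keep your Steps~1 and~2, discard the ad hoc dimension count in Step~3, and replace it with an application of Theorem~\ref{smolensky} (or reproduce its two-lemma proof).
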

\noindent
We prove Theorem~\ref{thm:hyperplanes} in Section~\ref{sec:hyperplanes}, using
the framework of Razborov~\cite{razborov} and Smolensky~\cite{smolensky87}. 

\paragraph{Explicit Arrangements.}
The space of oriented hyperplanes is a rich and well-studied object.  The book~\cite{stanley} provides many facts and examples.  The paper~\cite{alon-geom} and references therein give bounds on how many different boolean functions can be represented as $f_\mathcal{H}$ for some hyperplane arrangement $\mathcal{H}$.

General position hyperplane arrangements come from any $2k+1$ vectors in $\R^{k+1}$ such that every $k+1$ of them are linearly independent.   For a vector $v \in \R^{k+1}$ the hyperplane has normal $(v_1,\ldots,v_k)$ and translation~$v_{k+1}$. 
Explicit families of $m$ vectors in $\R^{d}$ such that every $d$ of them are independent are known for any $m,d$. For example, take the rows of an $m\times d$ Cauchy or Vandermonde matrix. 

\subsection{Hilbert Function Bounds}
The core of our technical contribution is the following theorem.   
\begin{theorem}\label{rank}\label{main} Any $C \subseteq \{0,1\}^n$ and any $d \in [n]$ satisfy the  relationships 
\[ |\{ I \in \sstr(C)  \  :  \ |I| \leq d\}| \ \leq\  h_d(C,\F)\  \leq \ |\{ I \in \str(C)  \  :  \ |I| \leq d\}| \] and  
\[ \max\{|I|  :  I\in \sstr(C)\} \ \leq\ \int(C)\ \leq \ \max\{|I|  :  I\in \str(C)\}.\] 
\end{theorem}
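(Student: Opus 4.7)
My plan is to prove the two bounds on $h_d(C,\F)$ first; the corresponding bounds on $\intdeg(C,\F)$ drop out with little extra work.

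For the upper bound $h_d(C,\F)\le |\{I\in\str(C):|I|\le d\}|$, I would show that the monomials $\{x_I|_C : I\in\str(C),\ |I|\le d\}$, writing $x_I:=\prod_{i\in I}x_i$, span the space of polynomial functions on $C$ of degree at most $d$. If $J\subseteq[n]$ with $|J|\le d$ is \emph{not} shattered by $C$, pick an unrealized pattern $s:J\to\{0,1\}$; the indicator polynomial $\pi_s(x)=\prod_{j:s_j=1}x_j\prod_{j:s_j=0}(1-x_j)$ vanishes on $C$, and its top-degree term is $\pm x_J$. Hence $\pi_s|_C\equiv 0$ rewrites $x_J|_C$ as a linear combination of $x_K|_C$ with $K\subsetneq J$; induction on $|J|$ reduces every monomial of degree at most $d$ to shattered monomials. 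Running the same reduction on an arbitrary multilinear polynomial (no degree cap) expresses every function $f:C\to\F$ as a combination of $x_I|_C$ with $I\in\str(C)$, all of degree at most $\vc(C)$, which yields $h_{\vc(C)}(C,\F)=|C|$ and therefore $\intdeg(C,\F)\le\vc(C)$.

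For the lower bound $|\{I\in\sstr(C):|I|\le d\}|\le h_d(C,\F)$, I would show the monomials $\{x_I|_C : I\in\sstr(C),\ |I|\le d\}$ are linearly independent on $C$. Fix a certificate $\bar{s}^I:[n]\setminus I\to\{0,1\}$ for each such $I$, so that the subcube $\Gamma_I=\{c:c|_{[n]\setminus I}=\bar{s}^I\}$ is contained in $C$. Assume $P=\sum_I \alpha_I x_I$ vanishes on $C$; restricting $P$ to $\Gamma_I$ and substituting $y=c|_I$ produces a multilinear polynomial identity on $\{0,1\}^I$ whose coefficient on $\prod_{i\in I}y_i$ receives a contribution only from indices $J\supseteq I$, giving
\[
\alpha_I \;+\; \sum_{J\supsetneq I,\ J\in\sstr(C)_{\le d}}\alpha_J\prod_{j\in J\setminus I}\bar{s}^I_j \;=\; 0.
\]
Ordering $\sstr(C)_{\le d}$ by decreasing cardinality (any linear extension of $\supseteq$ works) turns this into a triangular system with $1$'s on the diagonal, forcing every $\alpha_I=0$. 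The matching interpolation-degree bound $\intdeg(C,\F)\ge\max\{|I|:I\in\sstr(C)\}$ follows from the subcube structure: for $I\in\sstr(C)$ of size $m$ and $d<m$, the restriction map from degree-$\le d$ polynomial functions on $C$ to $\F^{\Gamma_I}$ factors through the space of polynomials in $m$ boolean variables of degree $\le d$, whose dimension is strictly less than $2^m=|\Gamma_I|$; hence this restriction cannot be surjective even though $\F^C\twoheadrightarrow\F^{\Gamma_I}$ is, giving $h_d(C,\F)<|C|$.

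The hard part will be making the triangular-system step airtight: different strongly shattered $I$ come with different certificates $\bar{s}^I$, so the off-diagonal coefficients depend on $\bar{s}^I$ in ways that look awkward to control. Fortunately only the diagonal entries --- always the empty product $1$, coming from $J=I$ --- need to be invertible, and the triangularity itself is purely a statement about set inclusion. Once that is noticed, everything else is routine bookkeeping around monomials, spanning, and the subcube guaranteed by strong shattering.
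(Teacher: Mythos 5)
Your proposal is correct, and the upper-bound argument and the $h_d$ lower bound track the paper's proof closely: the indicator polynomial $\pi_s$ is, up to sign, the paper's $\prod_{i\in I}\bigl(x_i-(1-s_i)\bigr)$, and your triangular system is a bookkeeping reformulation of the paper's argument, which instead picks a single inclusion-maximal $Z$ with $\alpha_Z\neq 0$, substitutes its certificate $\bar{s}^Z$, and observes that the resulting polynomial in the $Z$-variables has nonzero leading coefficient and hence a nonzero evaluation somewhere on the subcube. Where you genuinely diverge from the paper --- and actually improve on it --- is the interpolation-degree lower bound. The paper derives $\intdeg(C,\F)\ge\max\{|I|:I\in\sstr(C)\}$ directly from the linear independence of $\{x_I|_C : I\in\sstr(C)\}$, asserting that a maximal-degree monomial in that family ``cannot be expressed solely by lower degree monomials.'' As literally stated, this only rules out writing $x_Z|_C$ as a combination of the \emph{other} $\sstr(C)$-monomials; it does not by itself exclude a degree-$<|Z|$ representation that uses low-degree monomials outside $\sstr(C)$, which is what $\intdeg(C,\F)\ge|Z|$ really requires. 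Your restriction-to-subcube dimension count closes that gap cleanly: any degree-$\le d$ function on $C$ restricts on the full subcube $\Gamma_I$ (with $|I|=m$) to a degree-$\le d$ multilinear function in $m$ variables, a space of dimension $\sum_{j\le d}\binom{m}{j}<2^m$ when $d<m$, whereas $\F^C\twoheadrightarrow\F^{\Gamma_I}$ is surjective; hence $h_d(C,\F)<|C|$ for all $d<m$. This works over any field, uses only the subcube guaranteed by strong shattering, and is the right way to make the bound rigorous.
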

\noindent 
We prove Theorem~\ref{main} in Section~\ref{sec:proofmain}

{We note that the upper} bounds on interpolation degree in terms of VC dimension appear implicitly in the work of Frankl and Pach~\cite{babai-frankl} and explicitly in Gurvits~\cite{gurvits}, and in~\cite{DBLP:journals/cc/Smolensky97b}, {who use similar arguments to derive them.}

We strengthen the upper bound on the Hilbert function in Theorem~\ref{main} using down-shifts. 
\begin{theorem}\label{thm:shifthilbert}
Let $C\subseteq\{0,1\}^n$ and let $D=S_{n}(S_{n-1}(\ldots S_{1}(C)))$.
Then
\[ \int(C) \ \leq\   \max\{|I|  :  I\in D\}.\] 
\end{theorem}
\noindent
This Theorem is proven In Section~\ref{sec:shift}.  
We also discuss how Theorem~\ref{thm:shifthilbert} implies the upper bound in Theorem~\ref{main}.
\noindent

\subsection{Low-Degree Polynomial Approximations}
Classic results in bounded-depth circuit complexity utilize the fact that small circuits have low-degree approximating polynomials to reduce the task of proving circuit lower bounds to showing that a boolean function has no low-degree approximation~\cite {razborov, smolensky87, abfr}.   Smolensky shows in~\cite{smolensky} how to express all known degree lower bounds in terms of the Hilbert function.   For a boolean function $f$ consider the set $S = f^{-1}(1)$ as a subset of the boolean cube.  Smolensky shows that if $h_d(S,\F)$ is large, then $f$ is hard to approximate by low-degree polynomials. 
\begin{theorem}[\cite{smolensky}]\label{smolensky} Consider $f:\bitsz \to \{0,1\}$ and $p:\{0,1\}^n \to\F$.  Define $S = f^{-1}(1)$ and fix $d = \lfloor (n-\deg_\F(p)-1)/2 \rfloor$.  Then,  
\[ \Pr_x[p(x) \neq f(x)]  \geq \frac{2\cdot h_d(S,\F) - |S|}{2^{n}},\] where $x$ is uniform over $\{0,1\}^n$.
\end{theorem}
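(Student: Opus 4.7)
The plan is to execute Smolensky's classical dimension-counting argument, which uses the polynomial $p$ as a bridge between function spaces on different subsets of the cube. Set $e = \deg_\F(p)$; the hypothesis $d = \lfloor (n - e - 1)/2 \rfloor$ yields $2d + e \leq n - 1$, so that every product of a multilinear polynomial of degree $\leq d$ with $p$ remains a proper multilinear polynomial on the cube (of degree strictly less than $n$). Let $T = \{x : p(x) \ne f(x)\}$, $E = \{0,1\}^n \setminus T$, and partition $E = A \sqcup B$ with $A = S \cap E$ and $B = E \setminus S$. Since $p$ equals $f = \mathbf{1}_S$ on $E$, the key structural fact is that $p \equiv 1$ on $A$ and $p \equiv 0$ on $B$.

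The main object I would analyze is the linear space $W = V_d + p \cdot V_d$, where $V_d$ denotes multilinear polynomials of degree $\leq d$ on $\{0,1\}^n$. By the degree bound, $W \subseteq V_{d+e}$. Using the decomposition $\F^{\{0,1\}^n} \cong \F^A \oplus \F^B \oplus \F^T$ (under which the restriction of $W$ is injective, as polynomials are determined by their values on the cube), I can analyze the image piece by piece: for any element $q_1 + q_2 \cdot p$, the restriction to $A$ is $q_1 + q_2 \in V_d|_A$, the restriction to $B$ is $q_1 \in V_d|_B$, and on $T$ the image trivially has dimension at most $|T|$. This yields the upper bound
\[ \dim W \;\leq\; h_d(A, \F) + h_d(B, \F) + |T|. \]

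For the complementary lower bound, I would analyze $W|_S = W|_A \oplus W|_{S \cap T}$ and argue that $V_d$ and $p \cdot V_d$ contribute two essentially independent copies of $V_d|_S$ inside $W|_S$, apart from the overlap on $A$ (where $p \equiv 1$ forces the two copies to coincide). Concretely, a careful rank computation should yield $\dim W \geq 2\,h_d(S, \F) - h_d(A, \F)$. Combining with the upper bound gives $2\,h_d(S, \F) \leq 2\,h_d(A, \F) + h_d(B, \F) + |T|$, which together with the trivial bounds $h_d(A, \F) \leq |A|$ and $h_d(B, \F) \leq |B|$ and the partition identities $|A| + |B| + |T| = 2^n$ and $|S| = |A| + |S \cap T|$ can be rearranged into $2\,h_d(S, \F) \leq |S| + |T|$, equivalent to the claim after dividing by $2^n$.

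The main obstacle will be the lower bound step: one must verify that multiplication by $p$ really does lift functions on $A$ into a genuinely new subspace of $\F^{S \cap T}$, and that no multilinear identity in the cube ring causes an unexpected collapse of $\dim W$. It is exactly here that the degree hypothesis $2d + e \leq n - 1$ is essential, as it ensures that all the relevant products stay strictly below the multilinear ``ceiling'' of degree $n$ and hence remain linearly independent in the expected way.
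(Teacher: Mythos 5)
The claimed lower bound $\dim W \geq 2\,h_d(S,\F) - h_d(A,\F)$ is the crux of your argument, and it is not established: the ``careful rank computation'' is not carried out, and the heuristic that $V_d$ and $p\cdot V_d$ give two ``essentially independent'' copies of $V_d|_S$ has no support. In fact the natural lower bound one obtains from your decomposition is $\dim W \geq h_d(A,\F) + h_d(B,\F)$ (because $W|_E = V_d|_A \oplus V_d|_B$: given any $(g_A,g_B)$ pick $q_1$ with $q_1|_B = g_B$ and then $q_2$ with $q_2|_A = g_A - q_1|_A$), and this matches your upper bound up to the trivial slack $|T|$. So the space $W = V_d + p\,V_d$ simply does not carry the information about $h_d(S,\F)$ that the theorem needs; the upper and lower estimates on $\dim W$ are both correct and nearly tight, yet they say nothing nontrivial.

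There is a second, independent, problem: even granting your lower bound, the final rearrangement is wrong. From $2h_d(S,\F) \leq 2h_d(A,\F) + h_d(B,\F) + |T|$ and the trivial bounds you quote, one gets $2h_d(S,\F) \leq 2|A| + |B| + |T| = |A| + 2^n$. The target is $2h_d(S,\F) \leq |S| + |T| = |A| + |S\cap T| + |T|$, and $(|A|+2^n) - (|A| + |S\cap T| + |T|) = |E| - |S\cap T| \geq 0$ precisely when the approximation is good (small $|T|$), so your derived inequality is strictly weaker than the claim in the interesting regime. Symptomatically, your whole argument only invokes the weak consequence $d + e < n$ (to keep $p\,V_d$ multilinear); the real hypothesis $2d + e < n$ never gets used except as a promissory note in the final sentence.

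What is actually needed is the content of the paper's Lemma~\ref{lemma2}: if $P = \{x : p(x) \neq 0\}$ and $2d + \deg(p) < n$, then $h_d(P,\F) \leq |P|/2$. This is proved by assuming the contrary, extracting a full-rank $r\times r$ submatrix of the evaluation matrix $\mathsf{E}^{d,P}$ with $r > |P|/2$, and constructing \emph{two} degree-$d$ polynomials $q_1, q_2$ so that $p\,q_1\,q_2$ is nonzero at exactly one point of the cube --- which forces degree $n$ but has degree at most $\deg(p) + 2d < n$. It is this product of \emph{two} auxiliary degree-$d$ polynomials with $p$ (not the single product in $p\,V_d$) that uses the $2d + e < n$ budget. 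The rest of the paper's proof is routine: Lemma~\ref{symmetric-difference} gives the Lipschitz-type bound $|S\setminus T| \geq h_d(S,\F) - h_d(T,\F)$, and combining it with $h_d(P,\F) \leq |P|/2$ and $|P| = |F| + |P\setminus F| - |F\setminus P|$ yields $|F\triangle P| \geq 2h_d(F,\F) - |F|$, which is the theorem. Your setup of $A$, $B$, $T$ and the observation that $p\equiv 1$ on $A$, $p\equiv 0$ on $B$ is a reasonable start, but the argument must ultimately pass through a statement about the support of $p$, not about $V_d + p\,V_d$.
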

\noindent
We provide a proof of Smolensky's result in Appendix~\ref{app:smolensky} for completeness.  Theorem~\ref{main} implies the following corollary in terms of strongly shattered sets. 

\begin{corollary}\label{sstr-bound} Assume $n$ is odd. Consider $f:\bitsz \to \{0,1\}$.  If $|f^{-1}(1)|~=~2^{n-1}$ and $\sstr(f^{-1}(1))=\{Y\subseteq[n]:\lvert Y\rvert\leq\frac{n-1}{2}\}$, then for any  polynomial $p \in \F[x_1,\ldots,x_{n}]$ we have 
\[ \Pr_x[p(x) \neq f(x)] \geq \frac{1}{2} - \frac{10\deg_\F(p)}{\sqrt{n}}, \]
where $x$ is uniform over $\{0,1\}^n$. 
\end{corollary}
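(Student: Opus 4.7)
The plan is to combine Smolensky's Theorem (Theorem~\ref{smolensky}) with the strongly-shattered lower bound in Theorem~\ref{main} and then estimate a truncated sum of binomial coefficients near the middle of Pascal's triangle.

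First I would set $S \df f^{-1}(1)$ and let $k \df \deg_\F(p)$. Smolensky's Theorem gives
\[
\Pr_x[p(x) \neq f(x)] \;\geq\; \frac{2 h_d(S,\F) - |S|}{2^n}, \qquad d \df \left\lfloor \tfrac{n-k-1}{2} \right\rfloor .
\]
Since $\sstr(S) = \{I \subseteq [n] : |I| \leq (n-1)/2\}$ by hypothesis, every subset of $[n]$ of size at most $d \leq (n-1)/2$ lies in $\sstr(S)$. Theorem~\ref{main} therefore yields
\[
h_d(S,\F) \;\geq\; \bigl|\{I \subseteq [n] : |I| \leq d\}\bigr| \;=\; \sum_{j=0}^{d} \binom{n}{j}.
\]

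Next I would rewrite this as a ``complementary tail'' estimate. Since $n$ is odd, the identity $\sum_{j=0}^{(n-1)/2} \binom{n}{j} = 2^{n-1}$ gives
\[
\sum_{j=0}^{d} \binom{n}{j} \;=\; 2^{n-1} - \sum_{j=d+1}^{(n-1)/2} \binom{n}{j}.
\]
Substituting this into the Smolensky bound and using $|S| = 2^{n-1}$ yields
\[
\Pr_x[p(x) \neq f(x)] \;\geq\; \frac{1}{2} \;-\; \frac{2}{2^n}\sum_{j=d+1}^{(n-1)/2} \binom{n}{j}.
\]

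Finally I would bound the tail. The number of terms in the sum is $(n-1)/2 - d$, which is at most $(k+1)/2$ by the definition of $d$, and every binomial coefficient in this range is bounded by the central coefficient $\binom{n}{(n-1)/2}$. Using the standard estimate $\binom{n}{(n-1)/2} \leq 2^n \sqrt{2/(\pi n)}$ (Stirling), one obtains
\[
\frac{2}{2^n} \sum_{j=d+1}^{(n-1)/2} \binom{n}{j} \;\leq\; \frac{2}{2^n} \cdot \frac{k+1}{2} \cdot 2^n \sqrt{\frac{2}{\pi n}} \;=\; \frac{(k+1)\sqrt{2/\pi}}{\sqrt{n}} \;\leq\; \frac{10 k}{\sqrt{n}},
\]
where the last step absorbs the constants and the $+1$ (handling the trivial case $k=0$ separately, where the bound is vacuous or immediate). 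This gives the claimed inequality.

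There is no real obstacle here; the argument is essentially a direct plug-in of Theorem~\ref{main} into Theorem~\ref{smolensky}. The only bookkeeping step is verifying that the constants work out to $10$, which follows comfortably from $\sqrt{2/\pi} < 1$ and the tail having at most $(k+1)/2$ terms.
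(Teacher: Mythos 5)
Your proposal is correct and takes essentially the same approach as the paper: plug the strongly-shattered lower bound from Theorem~\ref{main} into Theorem~\ref{smolensky} with $|S| = 2^{n-1}$. The paper's proof simply says ``plugging these in gives the corollary'' without displaying the binomial-tail estimate; you have carried out that bookkeeping explicitly (the count of at most $(k+1)/2$ tail terms, the bound $\binom{n}{(n-1)/2} \le 2^n\sqrt{2/(\pi n)}$, and the $k=0$ case), and it checks out.
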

\begin{proof}  Since $\sstr(F)=\{Y\subseteq[n]:\lvert Y\rvert\leq\frac{n-1}{2}\}$, we have that \[ |\{ I \in \sstr(C)  \  :  \ |I| \leq d\}|  = \sum_{j=0}^d {n \choose j}\] for all $d = 0,1, \ldots, (n-1)/2$.   Theorem~\ref{main} implies that $h_d(f^{-1}(1),\F) = \sum_{j=0}^d {n \choose j}$  as well.  Plugging these into Theorem~\ref{smolensky}, along with the assumption $|f^{-1}(1)| = 2^{n-1}$, gives the corollary. \end{proof}

Bernasconi and Egidi~\cite{bernasconi} thoroughly characterize the Hilbert function for symmetric sets and prove that any nearly-balanced, symmetric boolean function is hard to approximate.  They leave as an open question deriving bounds for non-symmetric sets.   Our connection to VC  theory leads to new families of functions satisfying the conditions of Corollary~\ref{sstr-bound}.  Many of these functions, such as the linear programming feasibility functions from Section~\ref{sec:result_lp}, are non-monotone and non-symmetric. As a final remark, recent work shows that Smolensky's lower bound (and thus our result) extends  to nonclassical polynomials~\cite{nonclassical}.

\subsection{The Sandwich Theorem}
The following relationship which is a generalization of the Sauer-Perles-Shelah Lemma was discovered several times and independently~\cite{br, pajor, dress,ShatNews}. 
\begin{theorem}[Sandwich Theorem] \label{thm:sandwich} For any $C \subseteq \{0,1\}^n$ we have 
$ |\sstr(C)| \leq |C| \leq |\str(C)|.$
\end{theorem}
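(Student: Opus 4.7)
The plan is to derive the Sandwich Theorem as an immediate corollary of Theorem~\ref{main}, which already encodes both inequalities in a graded (degree-by-degree) form. The key observation is that the Sandwich Theorem is the ``full degree'' case $d = n$ of Theorem~\ref{main}: the Hilbert function $h_n(C,\F)$ equals $|C|$ because every function $f\colon C\to\F$ admits a multilinear representation of degree at most $n$, so the middle term of Theorem~\ref{main} collapses to $|C|$ when $d=n$, and the outer filtrations collapse to $|\sstr(C)|$ and $|\str(C)|$ respectively.

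More concretely, I would first recall that $\sstr(C), \str(C) \subseteq 2^{[n]}$, so every $I$ in either family trivially satisfies $|I|\leq n$. Therefore
\[ |\sstr(C)| \;=\; |\{I \in \sstr(C) : |I|\leq n\}| \quad\text{and}\quad |\str(C)| \;=\; |\{I \in \str(C) : |I|\leq n\}|. \]
Next, I would invoke the upper and lower bounds of Theorem~\ref{main} with $d = n$ to conclude
\[ |\sstr(C)| \;\leq\; h_n(C,\F) \;\leq\; |\str(C)|. \]
Finally, I would use the basic identity $h_n(C,\F) = |C|$, which is stated just after the definition of the Hilbert function in the preliminaries, to rewrite the middle term as $|C|$, giving exactly the Sandwich Theorem.

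Because the heavy lifting is done by Theorem~\ref{main}, there is no real obstacle in this proof; the only thing to double-check is that the hypothesis ``$d \in [n]$'' in Theorem~\ref{main} permits $d=n$ (it does, under the convention $[n] = \{0,1,\ldots,n\}$ or $\{1,\ldots,n\}$ used throughout the paper, since the Hilbert-function chain $h_0 \leq \cdots \leq h_n = |C|$ is explicitly stated). Thus the entire proof is a two-line application of the graded inequalities already established, and illustrates that the Sandwich Theorem is really a statement about the extremes of the Hilbert filtration rather than a purely combinatorial identity.
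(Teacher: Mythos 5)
Your proposal is correct and is exactly the paper's own argument: apply Theorem~\ref{main} at $d=n$, observe that the size filters become trivial so the outer terms are $|\sstr(C)|$ and $|\str(C)|$, and substitute $h_n(C,\F)=|C|$ in the middle. Nothing to add.
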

\noindent
Since $|\str(C)| \leq \sum_{i=0}^{\vc(C)} {n \choose i}$, this implies the usual formulation of the Sauer-Perles-Shelah  Lemma. 

Theorem~\ref{main} yields a new algebraic proof of the Sandwich Theorem.
Indeed, this follows from examining the case of $d=n$ and observing that $h_n(C,\F)=|C|$.

The Sandwich Theorem is tight in the sense that there are sets that achieve equality in both of its inequalities\footnote{In fact, it is well known (see for example~\cite{moran_thesis}) that any set achieving equality in one of the inequalities, also achieves equality in the other.}.
These sets are calles {\em shattering extremal sets}. For example, downward-closed sets are shattering extremal.
Shattering extremal sets have been rediscovered and studied in different contexts~\cite{Law,BR89,br,dress,Dress2,moran_thesis,KozmaM13,MeszarosR14,MoranW15}. 
In our context, Corollary~\ref{sstr-bound} says that shattering extremal sets $S$ of size $|S| = 2^{n-1}$ and VC dimension $\frac{n-1}{2}$ correspond to boolean functions that cannot be approximated by low-degree polynomials.  

\subsection{Downward-closed Sets and Chv\'{a}tal's Conjecture} 
Downward-closed sets have a well-studied, rich combinatorial structure.
A theorem of Berge~\cite{berge} implies the following fact.  For any downward-closed set $D$, there is a bijection $\pi: D \to D$ such that $a \cap \phi(a) = \emptyset$, for all $a \in D$.  We refer to such a bijection as a {\em pseudo-complement}. We prove the following generalization of the existence of a pseudo-complement.

\begin{theorem}~\label{pseudo} 
Let $D$ be any downward-closed set. Fix any mapping $\phi : D\to D$ with the property that $\phi(a) \subseteq a$ for all $a \in D$.   Then there exists a bijection $\pi : D\to D$ satisfying the condition that $a \cap \pi(a)= \phi(a)$ for all $a\in D$. 
\end{theorem}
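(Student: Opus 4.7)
The plan is to recast the conclusion as the nonvanishing of a determinant and then to establish this nonvanishing via a M\"obius-style factorization of the associated matrix into two triangular factors. Index a $|D| \times |D|$ matrix $M$ by elements of $D$ and set $M_{a,b} = \mathbf{1}[a \cap b = \phi(a)]$. A bijection $\pi : D \to D$ with $a \cap \pi(a) = \phi(a)$ for every $a$ is exactly a permutation $\pi$ such that $M_{a, \pi(a)} = 1$ for every $a$, so by the Leibniz expansion it suffices to prove $\det M \ne 0$ (any non-vanishing term, being a product of $\{0,1\}$ entries, pins down such a $\pi$).

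To factor $M$, I would write the indicator as $[a \cap b = \phi(a)] = [\phi(a) \subseteq b] \cdot [(a \setminus \phi(a)) \cap b = \emptyset]$ and apply inclusion--exclusion to the second factor. Substituting $T = \phi(a) \cup U$ then gives
\[
M_{a,b} \;=\; \sum_{T \,:\, \phi(a) \subseteq T \subseteq a} (-1)^{|T|-|\phi(a)|} \, \mathbf{1}[T \subseteq b].
\]
Crucially, since $D$ is downward-closed and each such $T$ satisfies $T \subseteq a \in D$, every $T$ appearing in the sum already lies in $D$. Hence the display exhibits $M = CV$, where $C$ and $V$ are $D \times D$ matrices with $C_{a,T} = (-1)^{|T|-|\phi(a)|} \, \mathbf{1}[\phi(a) \subseteq T \subseteq a]$ and $V_{T,b} = \mathbf{1}[T \subseteq b]$.

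Next I would order $D$ by increasing cardinality (ties broken arbitrarily) and read off triangularity. For $V$: if $|T| > |b|$ then $T \not\subseteq b$, and if $|T| = |b|$ then $T \subseteq b$ forces $T = b$; so $V$ is upper-triangular with $V_{T,T} = 1$, giving $\det V = 1$. For $C$: the constraint $\phi(a) \subseteq T \subseteq a$ forces $|T| \le |a|$ with equality only at $T = a$, so $C$ is lower-triangular with diagonal $C_{a,a} = (-1)^{|a|-|\phi(a)|} \in \{\pm 1\}$, giving $\det C = \pm 1$. Therefore $\det M = \det C \cdot \det V = \pm 1 \ne 0$, and the desired bijection $\pi$ exists.

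The main obstacle is spotting the factorization $M = CV$; once it is in hand the triangularity is essentially automatic. Conceptually, this is the linear-algebraic shadow of the tight case of Theorem~\ref{main} for downward-closed $D$: there $\str(D) = \sstr(D) = D$, and the monomials $\{\prod_{i \in T} x_i : T \in D\}$ form a basis of $\F^D$. The matrix $V$ is the point-evaluation matrix in this basis, and $C$ records the expansion of the indicator $b \mapsto [a \cap b = \phi(a)]$ in it via M\"obius inversion on the Boolean interval $[\phi(a), a]$. Downward-closedness is precisely what keeps these intermediate indices $T$ inside $D$, so that $C$ and $V$ are genuine square matrices whose determinants multiply.
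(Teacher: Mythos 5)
Your proof is correct, and it takes a genuinely different (and slightly sharper) route than the paper's. The paper argues over $GF(2)$: it first proves that the monomials $\{\prod_{i\in a} x_i : a \in C\}$ form a basis of $\{f: C \to GF(2)\}$ for \emph{any} $C$ (by induction, peeling off a maximal-weight element, which is its indicator), then proves that the functions $\prod_{i\in\phi(a)} x_i \prod_{i\in a\setminus\phi(a)}(1+x_i)$ form a basis of $\{f: D \to GF(2)\}$ by a second induction that expands each such product and uses downward-closedness to keep the remainder inside the span; finally it observes that the matrix $M$ is exactly the evaluation matrix of this second basis at the points of $D$, hence nonsingular. You instead exhibit the factorization $M = CV$ directly, with $V_{T,b}=\mathbf{1}[T\subseteq b]$ the zeta (evaluation) matrix and $C$ the M\"obius-type expansion of the indicator $\mathbf{1}[a\cap b = \phi(a)]$ on the interval $[\phi(a),a]$, then read off that both factors are triangular with $\pm 1$ diagonal under the cardinality order. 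This collapses the two inductions into one visible triangular structure, yields the stronger conclusion $\det M = \pm 1$ over $\mathbb{Z}$ (hence nonsingularity over every field, not just $GF(2)$), and makes the role of downward-closedness transparent: it is precisely what keeps the intermediate index $T$ inside $D$, so that $C$ and $V$ are square $D\times D$ matrices. Your closing remark correctly identifies that $V$'s nonsingularity is the paper's Lemma on monomial bases in the downward-closed case, and that $C$ encodes the change of basis in the paper's second Lemma; the factorization is a clean matrix-level packaging of the same mathematics, but it is a different and, I'd say, tighter argument.
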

\noindent 
Note that choosing $\phi(a) = \emptyset$ for all $a$ implies the existence of a  pseudo-complement.

In topology, downward-closed sets correspond to simplicial complexes.  We think of the $\phi$ as prescribing intersections. For simplicial complexes, this corresponds to prescribing that complexes intersect in certain faces.  We prove Theorem~\ref{pseudo} in Section~\ref{sec:pseudo}.  Our proof proceeds by proving that a certain matrix is invertible and thus its determinant is non-zero. Having non-zero determinant implies that the matrix contains a permutation matrix that yield the desired bijection.

We next discuss the result by Berge for the existence of pseudo-complements and its connections with Chv\'{a}tal's conjecture in extremal combinatorics~\cite{chvatal}.
Berge's result about pseudo-complements follows from the following stronger theorem that he proved.
\begin{theorem}[\cite{berge}]\label{berge}  If $D$ is a downward-closed set, then either $D$ or $D \setminus \emptyset$ can be partitioned into pairs of disjoint sets.  
\end{theorem}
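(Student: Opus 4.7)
The plan is to encode the desired bijection as a nonvanishing term in the determinant expansion of an appropriately chosen $|D| \times |D|$ matrix, and then establish that this matrix is invertible via the polynomial method on the downward-closed set $D$.

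Concretely, I would index rows and columns by elements of $D$ and define the $0/1$ matrix
\[ M_{a,b} = [a \cap b = \phi(a)]. \]
A bijection $\pi:D\to D$ with $a\cap\pi(a)=\phi(a)$ for every $a$ is exactly a permutation $\pi$ such that $\prod_{a\in D}M_{a,\pi(a)}\neq 0$. Hence it suffices to show $\det(M)\neq 0$, since at least one term in the Leibniz expansion is then nonzero, and a nonzero term corresponds to a valid $\pi$.

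The key step is exhibiting $M$ as an evaluation matrix of linearly independent polynomials. For each $a\in D$, define
\[ p_a(x_1,\dots,x_n)\;=\;\prod_{i\in\phi(a)} x_i\,\cdot\,\prod_{i\in a\setminus\phi(a)}(1-x_i). \]
Identifying each $b\in D$ with its indicator vector in $\{0,1\}^n$, a direct check gives $p_a(b)=[a\cap b=\phi(a)]=M_{a,b}$. Expanding the second product yields
\[ p_a \;=\; \sum_{T:\phi(a)\subseteq T\subseteq a} (-1)^{|T|-|\phi(a)|}\,\chi_T, \qquad \chi_T(x)=\prod_{i\in T}x_i, \]
so each $p_a$ is an $\F$-linear combination of the monomials $\chi_T$ with $T\subseteq a$, and since $D$ is downward-closed all such $T$ lie in $D$. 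The coefficient of $\chi_a$ in $p_a$ is $(-1)^{|a|-|\phi(a)|}\neq 0$.

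To conclude I would invoke the standard fact that $\{\chi_T:T\in D\}$ is an $\F$-basis of $\F^D$ when $D$ is downward-closed (provable by a minimality argument: any nontrivial linear dependence, evaluated at a minimal $T$ with nonzero coefficient, gives a contradiction). Ordering $D$ so that $|a_1|\le|a_2|\le\cdots\le|a_N|$ and observing that $T\subseteq a_i$ forces $|T|\le |a_i|$, the change-of-basis matrix from $\{\chi_T\}_{T\in D}$ to $\{p_a\}_{a\in D}$ is lower triangular with $\pm 1$ on the diagonal. Hence $\{p_a\}_{a\in D}$ is also a basis of $\F^D$, which means their evaluation matrix $M$ has full rank; in particular $\det(M)\neq 0$, completing the proof.

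The main obstacle is simply the bookkeeping for the triangularity/independence argument. Everything else is formal. One subtle point worth double-checking is that every monomial appearing in $p_a$ has index set $T$ lying in $D$, which uses the downward-closure hypothesis exactly once and crucially; without it, the evaluation matrix argument would live in a larger ambient space and triangularity alone would not yield invertibility.
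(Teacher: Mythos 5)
Your argument proves the wrong statement. What you have written is (almost verbatim) the paper's proof of Theorem~\ref{pseudo}: given $\phi:D\to D$ with $\phi(a)\subseteq a$, there exists a bijection $\pi:D\to D$ with $a\cap\pi(a)=\phi(a)$. The matrix $M_{a,b}=[a\cap b=\phi(a)]$, the polynomials $p_a=\prod_{i\in\phi(a)}x_i\prod_{i\in a\setminus\phi(a)}(1-x_i)$, the triangularity relative to the monomial basis $\{\chi_T:T\in D\}$ — these are exactly Lemmas~\ref{basis1} and~\ref{basis2} and the proof of Theorem~\ref{pseudo}. That part is correct. But it is not a proof of Theorem~\ref{berge}, which is Berge's theorem and is \emph{cited}, not proved, in this paper.

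The gap is that Theorem~\ref{berge} asks for a \emph{partition into pairs} of disjoint sets, which amounts to a fixed-point-free \emph{involution} $\pi$ with $a\cap\pi(a)=\emptyset$ (possibly after deleting $\emptyset$ to fix parity). Your determinant argument produces only a permutation $\pi$ with $a\cap\pi(a)=\phi(a)$; a nonzero term in the Leibniz expansion is an arbitrary permutation, and nothing in the argument forces it to be an involution. The matrix $M$ is generally not symmetric, and even if it were, a nonvanishing permanent/determinant term need not come from a symmetric permutation matrix. A permutation decomposes $D$ into disjoint cycles of unconstrained lengths, and a cycle of length $\geq 3$ (or an odd cycle) does not give a pairing. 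The paper itself draws precisely this distinction in the paragraph contrasting Theorem~\ref{berge} with Theorem~\ref{pseudo}: ``Berge's pair decomposition induces a permutation $\pi$ such that $\pi(\pi(a))=a$, whereas a permutation decomposes $D$ into disjoint cycles with unspecified lengths.'' So specializing $\phi\equiv\emptyset$ in your argument recovers the pseudo-complement corollary, which is strictly weaker than Berge's theorem. Also note the parity caveat is not addressed: when $|D|$ is odd one must remove $\emptyset$ before pairing, and the determinant argument is agnostic to this. To prove Theorem~\ref{berge} you would need a genuinely different (combinatorial/inductive) argument, as in Berge's original proof; the polynomial method as deployed here does not reach it.
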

\noindent
We need two definitions to explain Berge's motivation.   A family~$B$ of subsets of $[n]$ is called a {\em star} if there is an element $x\in [n]$ such that $x \in b$ for all $b \in B$.  It is called an {\em intersecting family} if every pair of sets in $B$ intersects.  Chv\'{a}tal's  conjecture is the following.
\begin{conjecture}[Chv\'{a}tal's conjecture]  If $D$ is a downward closed set, then the cardinality of the largest  star in $D$ is equal to the cardinality of the largest intersecting family in $D$.
\end{conjecture}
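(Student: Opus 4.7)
The statement is Chv\'atal's conjecture, open since 1974, so the plan I sketch is speculative and I do not expect it to settle the conjecture outright; rather, it leverages Theorem~\ref{pseudo} (the pseudo-complement with prescribed intersection) as a tool that was unavailable when most prior attacks were devised. The underlying hope is that freedom to choose $\phi$, not just $\phi \equiv \emptyset$ as in Berge's theorem, can extract finer information about intersecting subfamilies.

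The plan is to vary the prescribed intersection over candidate star-vertices. First, for each $x \in [n]$, apply Theorem~\ref{pseudo} with the map $\phi_x(a) = a \cap \{x\}$ to obtain a bijection $\pi_x : D \to D$ satisfying $a \cap \pi_x(a) = a \cap \{x\}$ for every $a \in D$. Second, let $I \subseteq D$ be a maximum intersecting family, of size $i$. For every $a \in I$ with $x \notin a$ we have $a \cap \pi_x(a) = \emptyset$, so the intersecting property forces $\pi_x(a) \notin I$. Hence $\pi_x$ restricts to an injection from $\{a \in I : x \notin a\}$ into $D \setminus I$.

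Write $s_x = |\{a \in D : x \in a\}|$ and $t_x = |\{a \in I : x \in a\}|$, so $s^{*} = \max_x s_x$ is the largest star and we want $i \leq s^{*}$. The injection above gives $i - t_x \leq |D| - i$, i.e.\ $t_x \geq 2i - |D|$. The goal is then to produce some $x^{*}$ with $t_{x^{*}} = i$, which would say exactly that $I$ sits inside the star at $x^{*}$ and conclude $i \leq s_{x^{*}} \leq s^{*}$. A natural route is first to reduce to the left-compressed case via the down-shift operators $S_1, \ldots, S_n$ used in Theorem~\ref{thm:shifthilbert}, which preserve downward-closedness and do not decrease the sizes of the largest star or of the largest intersecting subfamily, and then to combine the bijections $\pi_x$ for different $x$ into a single invariant --- perhaps through a rank argument on the function space $\F^{D}$ that underlies the Hilbert-function bounds developed in Section~\ref{section:hilbert}.

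The main obstacle is the last step. The bound $t_x \geq 2i - |D|$ becomes vacuous once $|D| \geq 2i$, which is exactly the regime beyond Berge's theorem where Chv\'atal's conjecture remains unresolved. Moreover, the bijections $\pi_x$ produced by Theorem~\ref{pseudo} are non-canonical, so there is no obvious way to glue them across distinct choices of $x$. Overcoming this --- either by strengthening Theorem~\ref{pseudo} to yield a bijection simultaneously compatible with a family of prescriptions $\{\phi_x\}_{x \in [n]}$, or by finding a Hilbert-function-style inequality that separates the characteristic vector of a star from that of a general intersecting subfamily --- is the point at which a genuinely new idea would be required.
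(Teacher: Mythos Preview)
The paper does not prove this statement: it is explicitly presented as Chv\'atal's conjecture and left open, with only Berge's corollary $i \le |D|/2$ derived. So there is no proof in the paper to compare against, and you are right to frame your proposal as a speculative plan rather than a proof.

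That said, two concrete points are worth flagging. First, your choice $\phi_x(a)=a\cap\{x\}$ in Theorem~\ref{pseudo} does not extract anything beyond Berge. Since $x\in a$ forces $x\in\pi_x(a)$, the bijection $\pi_x$ must carry the star at $x$ onto itself, and hence also carry $\{a\in D:x\notin a\}$ onto itself; restricted there, $\pi_x$ is exactly an ordinary pseudo-complement on the sub-ideal $D\cap 2^{[n]\setminus\{x\}}$. Your inequality $t_x\ge 2i-|D|$ is therefore just Berge's bound applied to that sub-ideal, and as you correctly observe it is vacuous precisely in the regime $|D|\ge 2i$ where the conjecture is open. Varying $x$ does not help, because each $\pi_x$ decouples into a piece on the star and a Berge piece off the star.

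Second, the proposed reduction ``to the left-compressed case via the down-shift operators $S_1,\ldots,S_n$'' is a non-step here: a downward-closed set is already a fixed point of every $S_i$ (if $a\in D$ has $a_i=1$ then its $i$th neighbor is a subset of $a$, hence already in $D$), so shifting $D$ does nothing. The compressions used in the literature on Chv\'atal's conjecture act on the intersecting family $I$ inside $D$ and are a different operation from the $S_i$ of this paper. As you yourself conclude, a genuinely new idea---beyond any single-prescription application of Theorem~\ref{pseudo}---would be needed.
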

\noindent
This conjecture remains open, aside from partial results, such as a corollary of Berge's theorem.  
\begin{corollary} 
In a downward-closed set $D$, any intersecting family has cardinality at most $|D|/2$.
\end{corollary}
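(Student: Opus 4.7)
The plan is to deduce the corollary directly from Berge's theorem (Theorem~\ref{berge}) by a short double-counting argument. Let $F \subseteq D$ be an intersecting family. The key observation is that any two disjoint sets $a, b \in D$ cannot both belong to $F$, since by definition $F$ consists of pairwise intersecting sets, while $a \cap b = \emptyset$.

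First I would apply Theorem~\ref{berge} to partition either $D$ or $D \setminus \{\emptyset\}$ into pairs $\{a_1, b_1\}, \{a_2, b_2\}, \ldots, \{a_t, b_t\}$ with $a_i \cap b_i = \emptyset$ for each $i$. In the first case $t = |D|/2$; in the second case $t = (|D|-1)/2$. By the observation above, for each index $i$ the pair $\{a_i, b_i\}$ contributes at most one element to $F$, so restricting $F$ to $\bigcup_i \{a_i, b_i\}$ yields at most $t$ elements of $F$.

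In the case where $D$ itself is partitioned, this immediately gives $|F| \leq |D|/2$. In the case where $D \setminus \{\emptyset\}$ is partitioned, we account separately for the empty set: if $\emptyset \in F$ then $F = \{\emptyset\}$ (otherwise $\emptyset$ would fail to intersect another member of $F$), and in this boundary situation the inequality $|F|\leq |D|/2$ holds trivially as long as $|D|\geq 2$. Otherwise $\emptyset \notin F$ and all of $F$ lies inside the paired portion, giving $|F| \leq (|D|-1)/2 \leq |D|/2$.

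There is no real obstacle here; the only minor subtlety is the bookkeeping around $\emptyset$, which is dispatched by noting that $\emptyset$ cannot sit in any intersecting family of size two or more. The entire proof fits in a few lines once Berge's partition theorem is invoked, and illustrates the strength of pairing arguments of this type: an existence statement about a disjointness partition of $D$ immediately yields a sharp extremal bound on intersecting subfamilies of $D$.
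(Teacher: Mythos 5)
Your proof is correct and is the ``direct'' derivation from Berge's Theorem~\ref{berge} via the pair decomposition: each disjoint pair $\{a_i,b_i\}$ contributes at most one member to an intersecting family, and the leftover $\emptyset$ (when present) is handled separately since $\emptyset$ is disjoint from everything. The paper alludes to this route but does not write it out; what the paper actually spells out is a slightly different argument, cited as a known observation, that derives the same bound from the \emph{weaker} pseudo-complement statement (a bijection $\pi:D\to D$ with $a\cap\pi(a)=\emptyset$, no involution required). There one decomposes the permutation into cycles and notes that, since consecutive elements of each cycle are disjoint, at most half of each cycle can lie in $F$. The paper's motivation for preferring the cycle argument is that its Theorem~\ref{pseudo} only produces a bijection (not an involution), so the cycle version is the one that follows from the paper's own result, while yours requires Berge's stronger pairing theorem. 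Both are sound; yours is a bit more elementary given the stronger hypothesis, and your bookkeeping around $\emptyset$ is more careful than the paper's (which silently ignores the degenerate case $D=\{\emptyset\}$, where the stated bound actually fails for the singleton family $\{\emptyset\}$).
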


We contrast Berge's theorem and our Theorem~\ref{pseudo}.  Berge's pair decomposition induces a permutation~$\pi$ such that $\pi(\pi(a)) = a$, whereas a permutation decomposes $D$ into disjoint cycles with unspecified lengths.  Many people have observed that the above corollary only needs the pseudo-complement result, instead of the stronger statement in Berge's theorem~\cite{anderson}.  Indeed, consider each disjoint cycle in the guaranteed permutation, and note that at most half of the sets in the cycle may mutually intersect.   Therefore, our Theorem~\ref{pseudo} implies the above corollary.

\section{The Hilbert Function for Subsets of the Boolean Cube} \label{section:hilbert}
We prove upper and lower bounds on the Hilbert function.  First, we prove the bounds in Theorem~\ref{main} involving the shattered and the strongly shattered sets. Then, we prove the bound in Theorem~\ref{thm:shifthilbert} using shifting. Finally we consider an example of  applying these bounds to analyze the Hilbert function of the parity function.  
\subsection{Bounding the Hilbert Function Using Shattered Sets}\label{sec:proofmain}

The high-level idea of the proof of Theorem~\ref{rank} is to define a vector space $V$ with $\dim(V)=|C|$ and prove that $|\sstr(C)|\leq \dim(V) \leq |\str(C)|$.  We sandwich the dimension $\dim(V)$ by finding a linearly independent set of size $|\sstr(C)|$ and a spanning set of size $|\str(C)|$. 

We analyze the $|C|$-dimensional vector space $\{ f: C\rightarrow\mathbb F\}.$
Evaluation on $C$ induces a natural mapping from $P \in \F[x_1,\ldots,x_n]$ to the restriction $P|_C \in \{f: C \to \F\}$.  
 The following lemma provides the desired sets of spanning monomials and linearly independent monomials, respectively. 
\begin{lemma}\ For all fields $\F$ and sets $C \subseteq \{0,1\}^n$ the following two facts hold. 
\begin{enumerate}
\item The monomials $\prod_{i\in Y} x_i$ for $Y\in\str(C)$ span $\{f:C\rightarrow \F\}$.
\item The monomials  $\prod_{i\in Y} x_i$ for $Y\in\sstr(C)$  are linearly independent in $\{f:C\rightarrow \F\}$.
\end{enumerate}
\end{lemma}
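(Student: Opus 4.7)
The plan is to prove the two parts by polynomial evaluation arguments carried out on carefully chosen witnesses coming from the (strong) shattering hypothesis.

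For the spanning claim, I would induct on $|I|$ to show that the restriction $\prod_{i \in I} x_i |_C$ lies in the $\F$-span of $\{\prod_{i \in J} x_i |_C : J \in \str(C)\}$. If $I$ itself is shattered there is nothing to do. Otherwise, some pattern $s : I \to \{0,1\}$ is unrealized by $C$, so the indicator polynomial $p_s(x) = \prod_{i \in I,\, s_i = 1} x_i \cdot \prod_{i \in I,\, s_i = 0}(1 - x_i)$ vanishes identically on $C$. Expanding $p_s$ in the multilinear monomial basis, the top monomial $\prod_{i \in I} x_i$ appears with nonzero coefficient $(-1)^{|\{i \in I : s_i = 0\}|}$, so the identity $p_s|_C = 0$ rewrites $\prod_{i \in I} x_i |_C$ as an $\F$-linear combination of monomials $\prod_{i \in J} x_i |_C$ indexed by proper subsets $J \subsetneq I$. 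Induction on $|J|$ then expresses each of these in terms of shattered-set monomials, and the base case $I=\emptyset$ is trivial since $\emptyset \in \str(C)$.

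For the independence claim, I would argue by contradiction. Suppose $\sum_{I \in \sstr(C)} a_I \prod_{i \in I} x_i \equiv 0$ on $C$ with some $a_I \neq 0$, and choose such an $I$ that is maximal under inclusion. Strong shattering of $I$ yields a pattern $\bar s : [n] \setminus I \to \{0,1\}$ whose $2^{|I|}$ extensions all lie in $C$, i.e.\ a full subcube $B_I \subseteq C$ with fixed coordinates $\bar s$ outside $I$. Restricting the identity to $B_I$ and splitting each index as $J = (J \cap I) \sqcup (J \setminus I)$, every monomial becomes $\prod_{i \in J \cap I} x_i$ scaled by the constant $\prod_{i \in J \setminus I} \bar s_i$. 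Grouping by $J \cap I$ produces a relation $\sum_{J_1 \subseteq I} c_{J_1} \prod_{i \in J_1} x_i \equiv 0$ on $B_I$. Since $B_I$ is a full subcube in the coordinates of $I$, the multilinear monomials $\{\prod_{i \in J_1} x_i : J_1 \subseteq I\}$ form a basis for $\F^{B_I}$, so every $c_{J_1}$ vanishes. The top coefficient gives $a_I + \sum_{J \supsetneq I,\, J \in \sstr(C)} a_J \prod_{i \in J \setminus I} \bar s_i = 0$, and maximality of $I$ kills every remaining term of the sum, forcing $a_I = 0$, a contradiction.

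The main obstacle is finding the right evaluation points for the independence step: the standard ``restrict and compare coefficients'' trick only works once we identify a set of inputs on which the polynomial identity is sharp enough to isolate the coefficient $a_I$. The decisive insight is that strong shattering is exactly the hypothesis that embeds a full subcube on the coordinates of $I$ inside $C$, and on this subcube the usual basis property of multilinear monomials does the rest of the work.
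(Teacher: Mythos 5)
Your proof is correct and follows essentially the same strategy as the paper: in the spanning step you use the indicator polynomial of an unrealized pattern (the paper's $\prod_{i\in I}(x_i-(1-s_i))$ is your $p_s$ up to a nonzero scalar) to replace $x_I$ by lower-degree monomials and induct; in the independence step you pick a set maximal under inclusion with nonzero coefficient, restrict to the full subcube guaranteed by strong shattering, and use that multilinear monomials are a basis on a subcube to force that coefficient to vanish. The only cosmetic difference is that the paper phrases independence constructively (exhibiting a point $c\in C$ where the polynomial is nonzero) while you argue by contradiction, but the underlying computation is the same.
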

\begin{proof}

For $Y\subseteq [n]$, let $x_Y$ denote the monomial $x_Y=\prod_{i\in Y}x_i$.  For the first item, we express every $f:C\rightarrow \F$ as a linear combination of
monomials $(x_Y)|_C$ where $Y\in\str(C)$.  
It suffices to express the monomials $(x_Y)|_C$ for all $Y \subseteq [n]$.   We prove this by induction.  For the base case, if  $Y \in\str(C)$, we are done.  
Otherwise, $Y$ is not shattered by $C$ and
there exists $s\in\{0,1\}^Y$ such that for all $c\in C$, we have 
$c|_Y \neq s.$
Consider 
$$P=\prod_{i\in Y}\left(x_i-\left(1-s_i\right)\right).$$
Note that $P(c)=0$ for all $c\in C$ and hence
$P|_C = 0|_C.$
Specifically, by expanding the product $\prod_{i\in Y}\left(x_i-\left(1-s_i\right)\right)$ we see
$$0|_C = P = (x_Y)|_C + (Q)|_C,$$
where the degree of $Q$ is smaller than $|Y|$.   By induction, we can write $Q$ as a combination of $x_I$ for $I \in \str(C)$.
Since $(x_Y)|_C = (-Q)|_C$ we get that $x_Y$ is in this span as well.

We now prove the second item.
Consider a linear combination
$$P=\sum_{Y\in\sstr(C)}{\alpha_Yx_Y}$$
such that not all $\alpha_Y$ are zero. 
We want to show that there is $c\in C$
such that $P(c)\neq 0$.
Let $Z\in\sstr(C)$ be a maximal set such that $\alpha_Z\neq0$.
Since $Z$ is strongly shattered by $C$, there is some $\bar s:([n]\setminus Z)\rightarrow\{0,1\}$
such that all extensions of it in $\{0,1\}^n$ are in $C$. Let $Q(x_i)_{i\in Z}$ be the polynomial 
obtained by plugging in the values of $c$ in the variables of $([n]\setminus Z)$.
By maximality of $Z$ it follows that the coefficient of $x_Z$ in $Q$ is $\alpha_Z\neq 0$, and so $Q$ is not the $0$ polynomial.
Therefore there is $s\in\{0,1\}^Z$ such such that $Q(s)\neq 0$.
Pick $c\in C$ such that
$$c_i = \begin{cases}
s_i  &i\in Z,\\
\bar s_i &i\in ([n]\setminus Z).
\end{cases}$$
It follows that $P(c) = Q(s)\neq 0$, which finishes the proof.
\end{proof}
We use this lemma to prove our results bounding the Hilbert function and interpolation degree. 
\begin{proof}[Proof of Theorem~\ref{rank}]   For the upper bound on $h_d(C,\F)$, the above proof shows how to express all monomials of degree $d$ using monomials of equal or smaller degree.   For the lower bound on $h_d(C,\F)$,  linear independence still holds after restricting set size. 

The upper bound on $\intdeg(C)$ is immediate. For the lower bound on $\intdeg(C)$,  since $\sstr(C)$ is downward-closed, the linear independence of the monomials in $\sstr(C)$ implies any maximal degree monomial in $\{(x_Y)|_C : Y\in\sstr(C)\}$ cannot be expressed solely by  lower degree monomials.  
\end{proof}

\subsection{Down-shifts, Downward-closed Bases, and the Hilbert Function}\label{sec:shift}

We prove here Theorem~\ref{thm:shifthilbert}, which is a direct corollary of the following theorem.
\begin{theorem}\label{thm:shifting}
Let $C\subseteq\{0,1\}^n$ and let $D=S_{n}(S_{n-1}(\ldots S_{1}(C)))$.
Then the set of monomials $\{\prod_{i\in I} x_i  :  I\in \setof{D}\}$
is a basis for the vector space of functions $\{f:C\rightarrow \F\}$.
\end{theorem}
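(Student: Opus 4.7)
The plan is to prove Theorem~\ref{thm:shifting} by induction on $n$, conditioning on the first coordinate to reduce the problem to two instances of the theorem in $\{0,1\}^{n-1}$. Since downshifts preserve cardinality, $|D|=|C|$, so it suffices to establish that the monomials $\{x_I : I \in D\}$ are linearly independent on $C$. The base case $n=0$ is immediate.

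The structural observation that drives the induction is that the shifts $S_2, \ldots, S_n$ commute with the partition of $\{0,1\}^n$ into its two $x_1$-layers, because each such $S_j$ only flips coordinate $j$ and its shifting decision depends only on the $j$-neighbor, which lies in the same $x_1$-layer. Writing $C' = S_1(C)$, $C'_b = \{c'\in C' : c'_1 = b\}$, and letting $\pi : \{0,1\}^n \to \{0,1\}^{n-1}$ denote the projection that drops $x_1$, iterating this commutation gives $\pi(D \cap \{c_1 = b\}) = S_n \cdots S_2(\pi(C'_b))$ for $b\in\{0,1\}$. Setting $D_0 = \{I \in D : 1 \notin I\}$ and $D_1 = \{I \in D : 1 \in I\}$, this says that $D_0$ and $\pi(D_1) := \{I\setminus\{1\} : I \in D_1\}$ are exactly the downshifts (in $\{0,1\}^{n-1}$) of $\pi(C'_0) = \pi(C_0) \cup \pi(C_1^{\mathrm{shift}})$ and $\pi(C'_1) = \pi(C_1^{\mathrm{stay}})$, where $C_1^{\mathrm{stay}}$ is the set of $c\in C$ with $c_1=1$ whose $1$-neighbor is also in $C$, and $C_1^{\mathrm{shift}}$ its complement in $\{c\in C : c_1=1\}$.

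Given any relation $P = \sum_{I \in D} \alpha_I x_I$ with $P|_C = 0$, decompose the polynomial as $P = P_0(x_{\hat{1}}) + x_1\, P_1'(x_{\hat{1}})$ so that $P_0$ is supported on the monomials of $D_0$ and $P_1'$ on those of $\pi(D_1)$. Each $c\in C$ with $c_1=0$ gives $P_0(\pi(c))=0$, and each $c\in C$ with $c_1=1$ gives $(P_0+P_1')(\pi(c))=0$. Specializing to $c \in C_1^{\mathrm{stay}}$ forces $\pi(c)\in\pi(C_0)$, so $P_0(\pi(c))=0$ and hence $P_1'(\pi(c))=0$; thus $P_1'$ vanishes on $\pi(C'_1)$. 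The inductive hypothesis applied to $\pi(C'_1)$ and its downshift $\pi(D_1)$ then forces $P_1' \equiv 0$, killing all $\alpha_I$ with $I \in D_1$. Substituting $P_1' = 0$ back into the $c \in C_1^{\mathrm{shift}}$ equations gives $P_0(\pi(c)) = 0$ on $\pi(C_1^{\mathrm{shift}})$, so $P_0$ vanishes on all of $\pi(C'_0)$; a second application of the inductive hypothesis, now to $\pi(C'_0)$ and $D_0$, yields $P_0 \equiv 0$, finishing the argument.

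The main obstacle is the structural lemma that justifies the whole setup: verifying that the shifts $S_2, \ldots, S_n$ really do commute with the first-coordinate layer decomposition. Once this is in hand, the order of the rest of the argument is forced—one must first use the stay-elements to kill $P_1'$ before the shift-elements can be used to eliminate $P_0$—and this matches naturally with the ordering of the shifts in the definition of $D$.
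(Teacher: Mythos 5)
Your proof is correct, and it takes a genuinely different (and arguably cleaner) route than the paper's.

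The paper also inducts on $n$ using the locality of down-shifts, but it conditions on the \emph{last} coordinate, since $S_n$ is the final shift applied to form $D$. Because $S_n$ itself does not preserve the $n$-th-coordinate layers, the paper has to track how $S_n$ interacts with the sets $D_0, D_1$ (via the observation that $n\in I\in D$ forces $I\setminus\{n\}\in D_0\cap D_1$, while $n\notin I\in D$ forces $I\in D_0\cup D_1$), and then establishes the stronger claim that any polynomial vanishing on $C$ has a leading monomial (in a carefully chosen lexicographic order) outside $D$. That leading-monomial bookkeeping, with a case split on whether $n\in I$ and on whether $\alpha_{I\setminus\{n\}}+\alpha_I$ vanishes, is the technical heart of the paper's proof.

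You instead condition on the \emph{first} coordinate and apply $S_1$ up front, so that the remaining shifts $S_2,\dots,S_n$ provably preserve the $x_1$-layers of $C'=S_1(C)$; this makes the identity $D\cap\{c_1=b\}=S_n\cdots S_2(C'_b)$ exact and lets you run a direct linear-independence argument with no monomial ordering at all. The two-step elimination — first kill $P_1'$ using $C_1^{\mathrm{stay}}=C'_1$ (where $P_0$ is already known to vanish because the $1$-neighbor is in $C_0$), then kill $P_0$ on $\pi(C'_0)=\pi(C_0)\cup\pi(C_1^{\mathrm{shift}})$ — is tight and matches the order of elimination to the order of the shifts. What the paper's route buys is a slightly stronger intermediate statement (Lemma~\ref{lem:leading_monomials}), which identifies the leading monomials of the ideal of $C$ and connects directly to the Gr\"obner-basis perspective it cites; what your route buys is avoiding the monomial order entirely and a more transparent reason why $|D|=|C|$ linearly independent monomials appear.
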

\noindent A theorem, equivalent in content, but expressed with respect to  Gr\"{o}bner bases, is proved in~\cite{Meszaros_thesis}. For completeness we include an elementary proof in Appendix~\ref{app:shifting}.

The upper bound given in Theorem~\ref{thm:shifthilbert} subsumes the upper bound in Theorem~\ref{main}. This is a direct corollary of the following simple lemma.
\begin{lemma}[Theorem 6.4 in \cite{moran_thesis}]
Let $C\subseteq\{0,1\}^n$ and let $D=S_{n}(S_{n-1}(\ldots S_1(C)))$.
We have that
$\setof{D}\subseteq \str(C)$, where we associate $\{0,1\}^n$ with subsets of $[n]$ in the natural way.
\end{lemma}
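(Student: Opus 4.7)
The plan is to show that if $I\in\sstr(C)$, then the $0/1$ indicator vector $\mathbf{1}_I$ belongs to $D$. I will proceed in two steps: (i) strong shattering is preserved by each elementary down-shift, i.e.\ $\sstr(C)\subseteq\sstr(S_i(C))$ for every $i\in[n]$; (ii) the downward-closedness of $D$ then extracts $\mathbf{1}_I$ from the subcube witnessing strong shattering in $D$.

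For step (i), suppose $I$ is strongly shattered by $C$ with witness $\bar s:([n]\setminus I)\to\{0,1\}$, and let $T=\{c\in\{0,1\}^n:c|_{[n]\setminus I}=\bar s\}\subseteq C$ be the corresponding subcube. Fix $i\in[n]$ and split into three cases. First, if $i\in I$ then for every $c\in T$ the $i$-neighbor $c^{(i)}$ also lies in $T\subseteq C$, so $S_i$ fixes every element of $T$ and $T\subseteq S_i(C)$. Second, if $i\notin I$ and $\bar s_i=0$ then every $c\in T$ has $c_i=0$, and $S_i$ never touches vectors with a $0$ in coordinate $i$; again $T\subseteq S_i(C)$. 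The interesting third case is $i\notin I$ and $\bar s_i=1$, where I instead consider the lowered subcube $T'=\{c^{(i)}:c\in T\}$: for each $c\in T$, either $c^{(i)}\in C$ (so $c^{(i)}\in S_i(C)$, since $S_i$ leaves it in place), or $c^{(i)}\notin C$ (so $S_i$ moves $c$ onto $c^{(i)}$). Either way $c^{(i)}\in S_i(C)$, so $T'\subseteq S_i(C)$, and $I$ is strongly shattered by $S_i(C)$ via the pattern $\bar s'$ obtained from $\bar s$ by flipping $\bar s_i$ to $0$. Iterating over $i=1,\ldots,n$ then yields $\sstr(C)\subseteq\sstr(D)$.

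Step (ii) is short. Since $D$ is downward-closed (as noted in the preliminaries for the full sequence of down-shifts), any $I\in\sstr(D)$ witnessed by some $\bar t:([n]\setminus I)\to\{0,1\}$ produces a vector $v\in D$ with $v|_I=\mathbf{1}$ and $v|_{[n]\setminus I}=\bar t$; the coordinatewise inequality $\mathbf{1}_I\leq v$ then forces $\mathbf{1}_I\in D$, which is exactly $I\in D$ under the natural correspondence. The main obstacle is the third subcase of step (i): naively preserving the original witness $\bar s$ fails, because $S_i$ can genuinely delete vectors of $T$ from the set; the clean fix is to pass instead to the lowered witness $\bar s'$, exploiting the fact that $S_i$ only sends a vector to its $i$-neighbor and nowhere else.
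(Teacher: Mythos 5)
Your proof is correct and takes essentially the same route as the paper: reduce to showing each elementary down-shift $S_i$ preserves strongly shattered sets, then use downward-closedness of $D$ to pass from $\sstr(D)$ to $D$. The main difference is one of care and economy. The paper's step (i) just says ``$B$ is either shifted or stays in place,'' which glosses over exactly the subtle third case you identify (when $i\notin I$ and $\bar s_i=1$, some elements of $T$ may shift while others stay, so $T$ itself need not survive intact; the correct statement is that the \emph{lowered} subcube $T'$ always lies in $S_i(C)$). Your explicit three-case analysis nails this down. In step (ii) the paper invokes the fact that downward-closed sets are shattering extremal (so $\sstr(D)=D$), whereas you give a one-line direct argument from downward-closedness. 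Both are fine; yours is slightly more self-contained, while the paper's reuses machinery it already has on hand.
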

\begin{proof}
Since $\setof{D}$ is downward-closed, it follows that it is shattering extremal and therefore $\sstr(D)=\setof{D}$.
So, it is enough to show that $\sstr(C)\subseteq\sstr(D)$. 
To this end, it suffices to show that for every class $C'$, $\sstr(C')\subseteq\sstr(S_i(C'))$.
Let $I\in\sstr(C')$. Therefore $C'$ contains a subcube $B$ in coordinates~$I$.
During the down-shift, $B$ is either shifted or stays in place, but in any case also $S_i(C')$ contains a subcube in coordinates $I$
and therefore $Y\in\sstr(S_i(C'))$.
\end{proof}

\paragraph{An Example.}

A simple example which demonstrates an application of Theorem~\ref{thm:shifthilbert}
is the set
\[C=\Bigl\{100000,010001, 001010,000111\Bigr\}.\]
Note that the last two coordinates are shattered and hence the upper bound on the interpolation degree
given by Theorem~\ref{main} is $2$. However,
\[S_5(S_4(S_3(S_2(S_1(C))))) =  \Bigl\{100000,010000, 001000,000100\Bigr\},\]
and hence the upper bound implied by Theorem~\ref{thm:shifthilbert} is $1$, which is better.

\section{Linear Programming is not in $AC^0[p]$}\label{sec:hyperplanes}\label{section:lp}



We now prove Theorem~\ref{thm:hyperplanes}.    Using the Razborov-Smolensky framework, it suffices to prove the following theorem, showing $f_\mathcal{H}$ cannot be approximated by a low-degree polynomial over any field.\footnote{We state the following theorem for $\{-1,1\}$ inputs to $f_{\mathcal{H}}$.  This only makes sense for fields containing these elements.  When $\F = \F_2$ simply replace $\{-1,1\}$ with $\{0,1\}$ in the definition of $f_\mathcal{H}$.}

\begin{theorem}\label{thm:hyperplanes-polynomial}
Let $\mathcal{H}$ be an arrangement of $2k+1$ hyperplanes in $\R^k$ that are in general position. For any any polynomial $p \in \F[x_1,\ldots,x_{2k+1}]$ we have 
\[ \Pr_s[p(s) \neq f_\mathcal{H}(s)] \geq \frac{1}{2} - \frac{10\deg_\F(p)}{\sqrt{2k+1}}, \]
where $s$ is uniform over $\{-1,1\}^{2k+1}$. 
\end{theorem}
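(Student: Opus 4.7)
The plan is to reduce the theorem to Corollary~\ref{sstr-bound} by verifying its two hypotheses for the set $C \df f_\mathcal H^{-1}(1) \subseteq \{-1,1\}^{2k+1}$ with $n=2k+1$. Concretely, I need to establish that
\begin{enumerate}
\item[(i)] $|C| = 2^{2k}$, i.e.\ exactly half of the sign patterns are feasible; and
\item[(ii)] $\sstr(C) = \{I\subseteq[2k+1] : |I|\leq k\}$.
\end{enumerate}
Given these two facts, Corollary~\ref{sstr-bound} applied with $n=2k+1$ immediately gives the claimed approximation lower bound.

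For (i) I will invoke the classical region-counting formula for a general position arrangement of $m$ hyperplanes in $\R^k$, which yields $\sum_{i=0}^{k}\binom{m}{i}$ regions (these are in bijection with feasible sign patterns). Plugging in $m=2k+1$ and using the identity
\[
\sum_{i=0}^{k}\binom{2k+1}{i} \;=\; \tfrac{1}{2}\sum_{i=0}^{2k+1}\binom{2k+1}{i} \;=\; 2^{2k},
\]
which follows from the symmetry $\binom{2k+1}{i}=\binom{2k+1}{2k+1-i}$, gives (i).

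The main content is (ii). The upper direction, that no set of size greater than $k$ is strongly shattered, is automatic: strong shattering implies shattering, and the VC dimension of the feasible sign patterns of an arrangement in $\R^k$ is at most $k$ (since any $k{+}1$ hyperplanes yield at most $\sum_{i=0}^{k}\binom{k+1}{i} < 2^{k+1}$ regions, again by the region formula). For the lower direction, fix any $I\subseteq[2k+1]$ with $|I|=j\leq k$, and consider the affine subspace $P \df \bigcap_{i\in I} h_i$. General position guarantees that $\dim(P) = k-j \geq 0$ and that $P$ is not contained in any hyperplane $h_\ell$ with $\ell\notin I$. Hence there exists a point $x^\ast\in P$ avoiding every $h_\ell$, $\ell\notin I$. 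Pick $\epsilon>0$ so small that the ball $B(x^\ast,\epsilon)$ still avoids every such $h_\ell$; then on $B(x^\ast,\epsilon)$ the signs of $\langle\vec n_\ell,\cdot\rangle-b_\ell$ for $\ell\notin I$ are fixed, giving a pattern $\bar s$ on $[2k+1]\setminus I$. Inside $B(x^\ast,\epsilon)$ the hyperplanes in $I$ form a central subarrangement through $x^\ast$ which, by general position, is combinatorially a coordinate arrangement in $\R^j$ and therefore meets all $2^j$ orthants. Consequently every extension of $\bar s$ to $[2k+1]$ is realized, which is precisely the definition of $I$ being strongly shattered by $C$.

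The main obstacle is step~(ii): the lower direction requires care in choosing $x^\ast$ and $\epsilon$ so that the local picture really does exhibit a full subcube of sign patterns. The argument above turns general position into exactly this local full-flag structure, which is standard but needs to be spelled out. Once (i) and (ii) are in hand, the theorem is a direct application of Corollary~\ref{sstr-bound}, and the circuit lower bound in Theorem~\ref{thm:hyperplanes} then follows from the Razborov--Smolensky framework in the usual way.
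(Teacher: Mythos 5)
Your proposal is correct and follows the same high-level route as the paper: reduce to Corollary~\ref{sstr-bound} by establishing $|f_\mathcal{H}^{-1}(1)| = 2^{2k}$ and $\sstr(f_\mathcal{H}^{-1}(1)) = \{I : |I| \le k\}$. The lower-direction argument for strong shattering — find $x^\ast$ in the $(k-j)$-dimensional flat $\bigcap_{i\in I}h_i$ avoiding the other hyperplanes, fix the off-$I$ signs in a small ball, and use linear independence of the normals to realize all $2^j$ local orthants — is essentially identical to the paper's Appendix~C lemma. Where you genuinely deviate is in how you handle the cardinality and the upper bound on shattering: you invoke the classical region-counting formula $\sum_{i=0}^{d}\binom{m}{i}$ as a black box, both to get $|f_\mathcal{H}^{-1}(1)| = 2^{2k}$ directly and to rule out shattering of any $(k{+}1)$-set (by comparing $2^{k+1}$ against the region count $2^{k+1}-1$ of the subarrangement). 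The paper instead proves $\str(S_\mathcal{H}) \subseteq \{I : |I|\le d\}$ by a self-contained convexity argument (shattering of $Y$ forces the image of the affine map $x\mapsto M_Y x - b_Y$ to hit every orthant, hence to contain the origin, so the $h_i$, $i\in Y$, have a common point), establishes $\sstr = \str$, and then derives the cardinality from its own Sandwich Theorem rather than citing an external count. Your version is shorter if one is happy to import the region formula; the paper's version is more self-contained and, as a bonus, re-derives the region formula as a consequence of the Sandwich Theorem. Both are valid; just be aware that citing the region formula for the subarrangement tacitly uses that general position is hereditary to subfamilies, which you should state.
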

\noindent
The proof of Theorem~\ref{thm:hyperplanes-polynomial} proceeds via a reduction to Corollary~\ref{sstr-bound}. 
Let
$$S_\mathcal{H} = \{s\in\{-1,1\}^n:~f_\mathcal{H}(s)=1\}.$$
To apply Corollary~\ref{sstr-bound} on $f_\mathcal{H}$ we will show  
$\lvert S_\mathcal{H}\rvert = 2^{2k}$ and $\sstr(S_\mathcal{H})=\{Y\subseteq [2k+1] : \lvert Y\rvert\leq k\}$.  
We establish this by the following proposition.  The facts we need about hyperplane arrangements follow from standard arguments~\cite{gartner, stanley}.  For intuition about the following proposition, see Figure~\ref{fig2} for a pictorial proof  in $\R^2$.

\begin{figure}[t]
\begin{center}
\includegraphics[scale=.33]{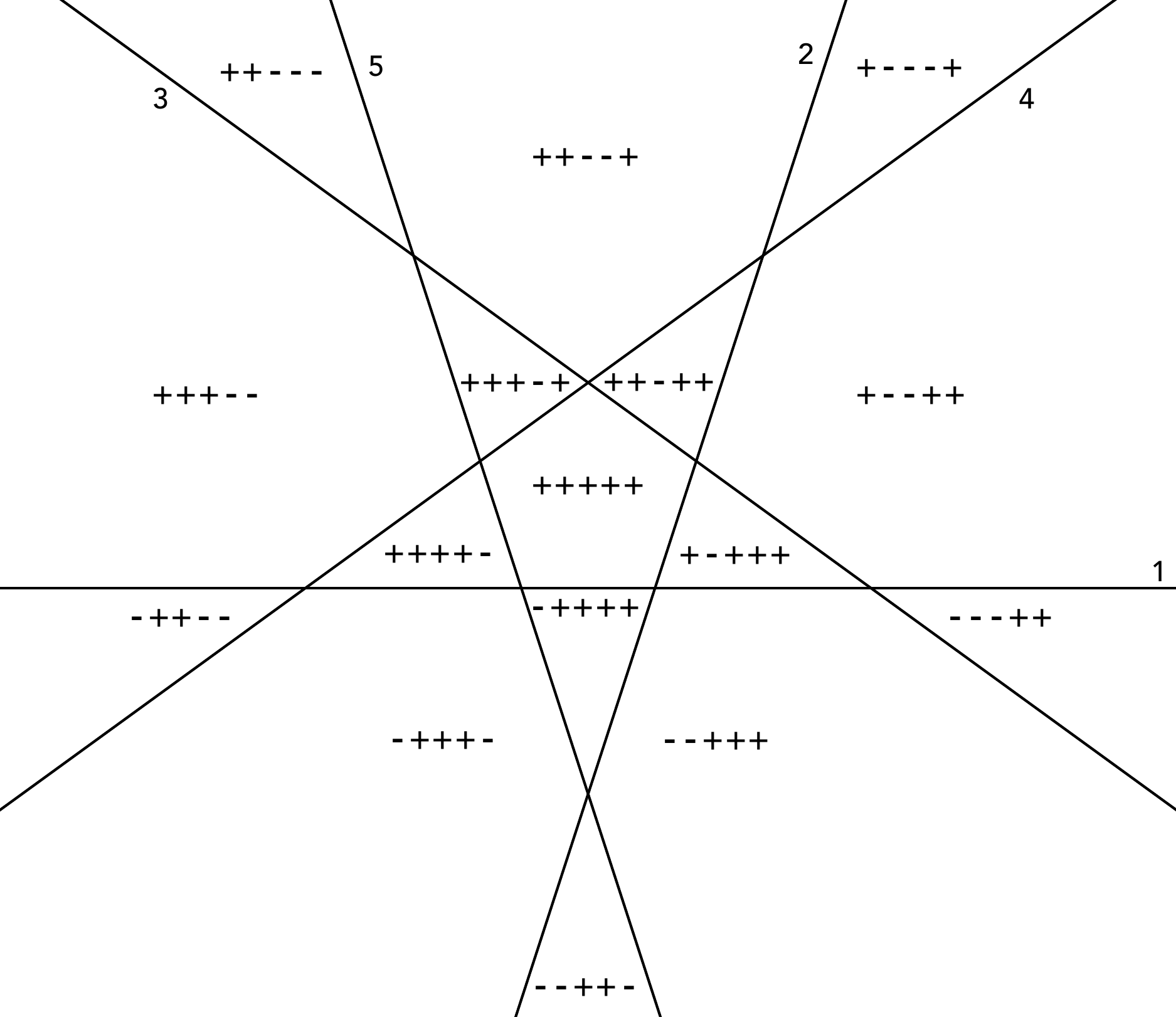}
\caption{Five hyperplanes divide $\mathbb{R}^2$ into 16 cells.  Cell labels in $\{-,+\}^5$ correspond to oriented hyperplane feasibility.   Notice that every two coordinates are strongly shattered, but no three coordinates are shattered. This provides a proof-by-picture of Proposition~\ref{thm:hyp-extremal} for $m=5$ and $d=2$.} \label{fig2}
\end{center}
\end{figure}

\begin{proposition}\label{thm:hyp-extremal}
For any $m$ hyperplanes $\mathcal{H}$ in $\R^d$ in general position 
$$\sstr(S_\mathcal{H}) = \str(S_\mathcal{H}) = \{Y\subseteq [m] : |Y|\leq d\}.$$
\end{proposition}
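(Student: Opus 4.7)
My plan is to prove both equalities by sandwiching. Since $\sstr(S_\H) \subseteq \str(S_\H)$ always holds, it suffices to show the two inclusions $\{I : |I| \leq d\} \subseteq \sstr(S_\H)$ (the lower bound) and $\str(S_\H) \subseteq \{I : |I| \leq d\}$ (the upper bound). Both arguments rest on the two facets of general position: every $d$ of the hyperplanes meet in a unique point (with linearly independent normals), while every $d+1$ of them fail to share a point.

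For the lower bound, fix $I$ with $|I| \leq d$ and extend it to a set $J \supseteq I$ with $|J| = d$. By general position the $d$ hyperplanes $\{h_j\}_{j \in J}$ meet at a single point $p$, and no $h_k$ with $k \in [m] \setminus J$ passes through $p$. I would then pick an open ball $B$ around $p$ small enough that (a) the sign of every affine form $\langle \cdot, n_k \rangle - b_k$ for $k \notin J$ is constant on $B$ -- call the resulting pattern $\bar s \in \{-1,+1\}^{[m]\setminus J}$ -- and (b) the $d$ hyperplanes in $J$ cut $B$ into $2^d$ open wedges, one realizing each sign vector in $\{-1,+1\}^J$. Item (b) is the linear-algebra fact that $x \mapsto (\langle x - p, n_j\rangle)_{j \in J}$ is a linear isomorphism of $\R^d$, so all $2^d$ sign patterns appear arbitrarily close to $p$. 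Together (a) and (b) exhibit the subcube $\{\bar s\} \times \{-1,+1\}^J$ inside $S_\H$, so $J \in \sstr(S_\H)$; and since $\sstr$ is downward-closed, also $I \in \sstr(S_\H)$.

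For the upper bound, fix $I \subseteq [m]$ with $|I| = d+1$. The $d+1$ normals $\{n_i\}_{i \in I}$ are vectors in $\R^d$, so they admit a nontrivial linear dependence $\sum_{i \in I} a_i n_i = 0$. General position forces every $a_i \neq 0$ (else $d$ of the normals would be dependent), and also forces $\sum_{i \in I} a_i b_i \neq 0$ (else the $d+1$ augmented vectors $(n_i, b_i) \in \R^{d+1}$ would be dependent, violating general position in its homogeneous form). Replacing $a$ by $-a$ if necessary, assume $\sum a_i b_i > 0$, and define $s \in \{-1,+1\}^I$ by $s_i = \sign(a_i)$. Any realizer $x$ of $s$ would satisfy $a_i (\langle x, n_i\rangle - b_i) > 0$ for each $i$; summing over $i$ and using $\sum a_i n_i = 0$ yields $0 > \sum a_i b_i$, contradicting the choice of sign. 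Hence $s$ is not realized, $I$ is not shattered, and since $\str(S_\H)$ is downward-closed no set of size $>d$ is shattered.

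I do not anticipate any serious obstacle. The small-ball step in the lower bound is a standard continuity and linear-independence argument; the upper bound is a short Farkas-style duality specialized to general position. The only bookkeeping to be careful about is confirming that the unique (up to scaling) dependence among the $d+1$ normals has all nonzero coefficients and that the translations satisfy $\sum a_i b_i \neq 0$ -- both follow directly from the two facets of general position stated at the outset.
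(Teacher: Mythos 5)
Your proposal is correct, and its \emph{lower} bound argument ($\{I:|I|\leq d\}\subseteq\sstr(S_\H)$) matches the paper's second appendix lemma almost exactly: extend $I$ to a $d$-set $J$, locate the unique intersection point $\vec x^*$ of the $h_j$ ($j\in J$), observe that by general position $T(\vec x^*)_k\neq 0$ for $k\notin J$, and then perturb by $\eps\vec w$ (your small-ball wedges) to realize every sign pattern on $J$ while freezing the signs off $J$. The one cosmetic difference is that you present it as a ball-and-wedge picture while the paper writes it as an explicit $\eps'\vec w$ perturbation, but these are the same computation.

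Your \emph{upper} bound argument is genuinely different from the paper's. The paper proves that shattering a set $Y$ forces the hyperplanes $\{h_i\}_{i\in Y}$ to have nonempty common intersection, via a convexity argument: the image of the affine map $T_Y$ is convex, hits every orthant of $\R^Y$, hence contains the origin. General position (empty intersection of any $d+1$ hyperplanes) then caps $|Y|$ at $d$. You instead pick $|I|=d+1$, take the (essentially unique) linear dependence $\sum a_i n_i=0$, use general position twice --- once to get all $a_i\neq 0$ (any $d$ normals independent) and once to get $\sum a_i b_i\neq 0$ (equivalently, the $d+1$ hyperplanes have no common point) --- and exhibit an explicit unrealizable sign pattern $s_i=\sign(a_i)$ by summing the strict inequalities. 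This is a Farkas-type dual certificate rather than a convex-hull/topology argument. Both are elementary and correct; yours is somewhat more constructive (it names the bad pattern), while the paper's gives the slightly stronger geometric fact that any shattered index set must index hyperplanes with a common point, regardless of $d$. The one small point worth spelling out in your writeup is the equivalence you gesture at: $\sum a_i b_i = 0$ together with $\sum a_i n_i = 0$ and all $a_i\neq 0$ would force the $d+1$ hyperplanes through a common point (evaluate at the unique intersection of any $d$ of them), which is what general position rules out --- that is the precise sense in which ``the augmented vectors are dependent'' contradicts the hypothesis.
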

\begin{proof}
The two lemmas in Appendix~\ref{app:hyplerplanes} characterize the shattered and strongly shattered
sets of $S_\mathcal{H}$ when $\mathcal{H}$ is in general position. 
The first lemma shows  $\str(S_\mathcal{H})\subseteq\{Y\subseteq [m] : |Y|\leq d\}$. The second lemma shows $\{Y\subseteq [m] : |Y|\leq d\}\subseteq\sstr(S_\mathcal{H})$.
Since $\sstr(S_\mathcal{H})\subseteq\str(S_\mathcal{H})$
these two lemmas combine to finish the proof.
\end{proof}

Proposition~\ref{thm:hyp-extremal} implies Theorem~\ref{thm:hyperplanes}. 
The equality $\sstr(S_\mathcal{H}) = \str(S_\mathcal{H})$ along with the Sandwich Theorem implies that $|S_\mathcal{H}|=|\sstr(S_\mathcal{H})|$.  Let $k$ be the ambient dimension in Theorem~\ref{thm:hyperplanes}.   The above proposition for  $m=2k+1$ and $d=k$ gives $\lvert S_\mathcal{H}\rvert = 2^{2k}$ and also $\sstr(S_\mathcal{H})=\{Y\subseteq [2k+1] : \lvert Y\rvert\leq k\}$. Thus $f_\mathcal{H}$ satisfies the premises of Corollary~\ref{sstr-bound}, and Theorem~\ref{thm:hyperplanes} follows.
%



\section{Downward-closed Sets and Prescribed Intersections}\label{sec:pseudo}\label{section:downward-closed}
We prove Theorem~\ref{pseudo}.   Let $D\subseteq \{0,1\}^n$ be a downward-closed set. Fix $\phi : D\to D$ with the property that $\phi(a) \subseteq a$ for all $a \in D$.    We will show that there exists a bijection $\pi : D\to D$ satisfying the condition that $a \cap \pi(a) = \phi(a)$ for all $a\in D$.     We first prove two lemmas about the function space $\{f:D \to GF(2)\}$ and then use these to prove the existence of $\pi$.   The first lemma holds for all subsets of the boolean cube.\newpage

\begin{lemma}\label{basis1} Let $C \subseteq \{0,1\}^n$. The monomials $\displaystyle \prod_{i \in a} x_i$ for $a \in C$ form a basis for $\{f:C~\to~GF(2)\}$.
\end{lemma}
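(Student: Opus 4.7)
The plan is a direct linear-algebra argument: since $\dim\{f:C\to GF(2)\}=|C|$ and we are handed exactly $|C|$ candidate monomials, it suffices to prove that they are linearly independent. Viewing each monomial $x_a = \prod_{i\in a} x_i$ as a function on $C$, its value at $b\in C$ is $\1[a\subseteq b]$. So I would form the $|C|\times |C|$ ``inclusion'' matrix $M$ over $GF(2)$ defined by $M_{a,b} = \1[a\subseteq b]$ and show that it is non-singular.

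To do so, enumerate $C=\{a_1,\ldots,a_N\}$ according to any linear extension of the subset order; concretely, sort by $|a_i|$, breaking ties arbitrarily. Under this ordering the diagonal entries are $M_{a_i,a_i}=1$, and for $i>j$ one has $|a_i|\geq |a_j|$; if additionally $a_i\subseteq a_j$ then $|a_i|=|a_j|$ forces $a_i=a_j$, contradicting $i>j$. Hence $M_{a_i,a_j}=0$ whenever $i>j$, so $M$ is upper triangular with $1$'s on the diagonal. Therefore $\det M = 1$ in $GF(2)$, which gives the required linear independence and hence the basis property.

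There is no serious obstacle here — this is just a triangulation argument, and the only subtle point is to list the elements of $C$ in an order compatible with $\subseteq$ so that the evaluation matrix comes out triangular. Since the argument uses nothing field-specific, the same lemma in fact holds verbatim over an arbitrary field; the restriction to $GF(2)$ in the statement is presumably tailored to its downstream use in the proof of Theorem~\ref{pseudo}.
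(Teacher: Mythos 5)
Your proof is correct and is essentially the same argument as the paper's: the paper peels off a maximal-Hamming-weight element $z\in C$ by noting that $\prod_{i\in z}x_i$ is the indicator of $z$ on $C$ and inducts, which is precisely your triangulation read off one row at a time. Your closing observation is also right — neither your argument nor the paper's uses anything specific to $GF(2)$, so the lemma does hold verbatim over any field.
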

\begin{proof} We proceed using induction on $|C|$.  When $C = \{a\}$ for $a \in \{0,1\}^n$ the function space has dimension one and $\prod_{i \in a} x_i = 1$.  For general $C$, let $z \in C$ denote a maximal Hamming weight element in $C$.   Notice that $\prod_{i \in z} x_i$ is an indicator function  in $\{f:C \to GF(2)\}$ for the input $z$.  By the inductive hypothesis on $(C \setminus \{z\})$, we know the set of monomials $\prod_{i \in a} x_i$ for $a \in (C \setminus \{z\})$ form a basis for $\{f:(C \setminus \{z\})\to GF(2)\}$.  Since $\prod_{i \in z} x_i$ is an indicator function, we may add it to the basis for $\{f:(C \setminus \{z\})\to GF(2)\}$ and achieve a basis for $\{f:C \to GF(2)\}$.
 \end{proof}
We remark that if $C$ is downward-closed, then it is shattering extremal, and the above lemma is a corollary of the Sandwich theorem.  
We prove the following stronger claim as well.  
 
 \begin{lemma}\label{basis2} Let $D \subseteq \{0,1\}^n$ be a downward-closed set.  Fix any mapping $\phi : D\to D$ with the property that $\phi(a) \subseteq a$ for all $a \in D$.   The functions \[ \prod_{i \in \phi(a)} x_i \prod_{i\in a \setminus \phi(a)}(1+x_i)\] for $a \in D$ form a basis for $\{f:D \to GF(2)\}$.
\end{lemma}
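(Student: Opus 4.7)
The plan is to expand each
\[ g_a \;:=\; \prod_{i \in \phi(a)} x_i \prod_{i \in a \setminus \phi(a)} (1+x_i) \]
into the monomial basis supplied by Lemma~\ref{basis1}, and then argue that the resulting $|D| \times |D|$ change-of-basis matrix is triangular with $1$'s on the diagonal, hence invertible over $GF(2)$.

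First I would distribute the second product to obtain
\[ g_a \;=\; \prod_{i\in \phi(a)} x_i \cdot \sum_{S \subseteq a \setminus \phi(a)} \prod_{i \in S} x_i \;=\; \sum_{\phi(a) \subseteq T \subseteq a} x^T, \]
where $x^T := \prod_{i \in T} x_i$. Every $T$ in this sum satisfies $T \subseteq a \in D$, and since $D$ is downward-closed, $T \in D$; so each monomial appearing here lies in the basis $\{x^T|_D : T \in D\}$ provided by Lemma~\ref{basis1}. Define the $|D| \times |D|$ coefficient matrix $M[a, T] := \mathbf{1}[\phi(a) \subseteq T \subseteq a]$. The family $\{g_a|_D\}_{a \in D}$ is then the image of the monomial basis under the linear map with matrix $M$, so to finish it suffices to show that $M$ is invertible over $GF(2)$.

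For this, I would totally order $D$ by any refinement of cardinality, say $a \prec b$ when $|a| < |b|$ with arbitrary tie-breaking. Listing rows and columns in this order, $M[a, T] \neq 0$ forces $T \subseteq a$, hence $|T| \leq |a|$; within a single cardinality class, $T \subseteq a$ with $|T| = |a|$ forces $T = a$. Thus $M$ is lower triangular, with $M[a, a] = 1$ on the diagonal (using $\phi(a) \subseteq a \subseteq a$), and so $\det M = 1$ over $GF(2)$. Therefore $\{g_a|_D\}_{a \in D}$ is a basis of $\{f : D \to GF(2)\}$.

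The argument is largely routine bookkeeping once the expansion is written down, and I do not foresee a substantive obstacle. The one point worth flagging is that the specific $\phi$ affects only the \emph{lower} endpoint of the interval $[\phi(a), a]$ of monomials contributing to $g_a$ and is irrelevant to the triangularity; the downward-closedness of $D$ is used precisely to ensure that every $x^T$ appearing in the expansion is indexed by an element of $D$, so that Lemma~\ref{basis1} applies and the matrix $M$ is genuinely the change-of-basis matrix between two families of size $|D|$.
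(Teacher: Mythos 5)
Your proof is correct and rests on the same key observation as the paper's: expanding $g_a$ over $GF(2)$ gives $x^a$ plus monomials $x^T$ with $T \subsetneq a$, all of which lie in $D$ by downward-closedness. The paper packages this as an induction on $|a|$ to show the $g_a$ span (then concludes by counting), whereas you package it as triangularity of the change-of-basis matrix in a cardinality-respecting order; these are two phrasings of the same argument.
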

\begin{proof} 
Let $\mathcal{B}$ denote the set of polynomials that we wish to show is a basis.
Since the cardinality of $\mathcal{B}$ is $|D|$ it is enough to show that it is a spanning set.
By Lemma~\ref{basis1}, it is enough to show that every monomial of the form $\prod_{i\in a}x_i$ for $a\in D$ can be expressed as a linear combination of polynomials in $\mathcal{B}$.
We proceed by induction on the size of $a$. The case of $a=\emptyset$ is trivial.
For the induction step, let $a\in D$ be non-empty. Expand the polynomial
\[\prod_{i \in \phi(a)} x_i \prod_{i\in a\setminus\phi(a)}(1+x_i) = \left(\prod_{i \in a} x_i\right) + r,\]
where $r$ is a linear combination of monomials $\prod_{i\in b}x_i$ for $b \subseteq a$ and $b \neq a$.
Since $D$ is downward-closed, by induction hypothesis $r$ is in the span of $\mathcal{B}$. 
Thus, 
$$\prod_{i\in a}x_i = \left(\prod_{i \in \phi(z)} x_i \prod_{i\in a\setminus\phi(a)}(1+x_i)\right) + r$$
is also in the span of $\mathcal{B}$, and we are done.
 \end{proof}

\begin{proof}[Proof of Theorem~\ref{pseudo}] We show there exists a bijection $\pi : D\to D$  such that $a \cap \pi(a) = \phi(a)$ for all $a\in D$, for the given map $\phi$.   Consider the $|D| \times |D|$ boolean matrix $M$ defined as follows.   Index the rows and columns both by $D$, and define the element in location $(a,b) \in D \times D$ to be one if and only if $a \cap b = \phi(a)$.  We claim that $M$ is nonsingular.  Indeed, the rows of $M$ correspond to the functions in Lemma~\ref{basis2}.  Since they form a basis,  the row space of $M$ is $|D|$-dimensional.  This implies the determinant of $M$ is nonzero.  There must exist a permutation  $\pi: [n] \to [n]$ such that \[\prod_{i = 1}^{|D|} M_{i,\pi(i)} = 1.\]   By the definition of $M$, we found the  bijection $\pi$ we were looking for.  
\end{proof}

\section{Conclusion}
We exhibited a connection between algebra and combinatorics.  We provided a general way to lower bound the Hilbert function.  We showed a new family of functions cannot be approximated by low-degree polynomials.  We provided a polynomial method proof of the Sandwich theorem and for a new theorem about prescribed intersections.   

\subsection{Open Directions}
We have introduced the interpolation degree as a new complexity measure on subsets of the boolean hypercube.   
We find it interesting  to understand the structure of sets with low interpolation degree.  The following fact gives a simple characterization of sets with interpolation degree one. 
\begin{proposition}  
A set $C\subseteq \bitsz$ has $\intdeg(C) = 1$ if and only if it is affinely independent.
\end{proposition}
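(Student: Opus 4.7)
The plan is to translate the condition $\intdeg(C,\F)=1$ into a linear-algebraic statement about an explicit $|C|\times(n+1)$ evaluation matrix, and then recognize that statement as the usual definition of affine independence.

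First I would unfold definitions. By definition of interpolation degree, $\intdeg(C,\F)=1$ is equivalent to $h_1(C,\F)=|C|$. The space of multilinear polynomials of degree at most $1$ is the $(n+1)$-dimensional $\F$-span of $\{1,x_1,\ldots,x_n\}$, and the restriction map $\rho:p\mapsto p|_C$ from this space into $\F^C$ is represented, in these natural bases, by the $|C|\times(n+1)$ matrix $A_C$ whose row indexed by $c\in C$ is $(1,c_1,\ldots,c_n)$. Since $h_1(C,\F)$ is the dimension of the image of $\rho$, we have $h_1(C,\F)=\mathrm{rank}(A_C)$.

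Second, I would argue that $\mathrm{rank}(A_C)=|C|$ iff the rows of $A_C$ are linearly independent. A nontrivial linear dependence among these rows is a choice of scalars $(\lambda_c)_{c\in C}$, not all zero, satisfying $\sum_{c\in C}\lambda_c=0$ (from the first column of $A_C$) and $\sum_{c\in C}\lambda_c\, c=0$ in $\F^n$ (from the remaining $n$ columns). These are precisely the equations witnessing an affine dependence of $C$ in $\F^n$, so linear independence of the rows of $A_C$ is equivalent to affine independence of $C$.

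Chaining the two equivalences completes the proof: $\intdeg(C,\F)=1\iff h_1(C,\F)=|C|\iff\mathrm{rank}(A_C)=|C|\iff C$ is affinely independent in $\F^n$. There is no real technical obstacle; the only care point is the boundary regime $|C|>n+1$, where both sides fail consistently (since $\mathrm{rank}(A_C)\le n+1<|C|$ and more than $n+1$ points of $\F^n$ are automatically affinely dependent), and the degenerate case $|C|=1$, where both sides hold trivially.
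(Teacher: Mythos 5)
Your proof is correct. Worth noting: the paper in fact gives no proof of this proposition---it is stated without argument in the Conclusion's ``Open Directions'' subsection, after a pointer to Remscrim's reformulation of $\intdeg$ via ranks of incidence matrices and a remark that the degree-one case is ``particularly simple.'' What you wrote supplies exactly the argument the paper alludes to: $\intdeg(C,\F)=1$ unfolds to $h_1(C,\F)=|C|$; $h_1(C,\F)$ is the rank of the $|C|\times(n+1)$ evaluation matrix $A_C$ whose row indexed by $c$ is $(1,c_1,\ldots,c_n)$; and full row rank of $A_C$ is, read off coordinate by coordinate, the affine-independence condition $\sum_c \lambda_c=0$, $\sum_c\lambda_c c=0\Rightarrow\lambda\equiv 0$. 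Your attention to the boundary cases is appropriate and harmless: under the paper's definition $\intdeg$ is a minimum over $d\in[n]=\{1,\ldots,n\}$, hence always $\ge 1$, so $\intdeg(C,\F)=1$ is literally equivalent to $h_1(C,\F)=|C|$ with no extra side condition about $h_0$, and the $|C|=1$ and $|C|>n+1$ regimes behave as you describe.
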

To see why this holds, note that $\intdeg(C)=1$ if and only if the polynomials $1,x_1,\ldots,x_n$ 
span $\{f:C\rightarrow\F\}$. This is equivalent to saying that the matrix
\[	
	\left( \begin{array}{cccc}
	1 & c_{1,1} & \ldots & c_{1,n} \\ 
	. & . & \ldots & . \\
	. & . & \ldots & . \\
	. & . & \ldots & . \\
	1 & c_{m,1} & \ldots & c_{m,n} 
	\end{array} \right )
\]
has rank $m=|C|$. Now, this matrix has rank $m$ if and only if its rows are independent, which exactly means that $C$ consists of $m$ affinely independent vectors.

Is there a simple characterization of sets having interpolation degree two?
We remark that the fact that $\intdeg(C)\leq\vc(V)$ implies that such a characterization will also shed light on the structure of sets with VC dimension two, for which our understanding is lacking~\cite{AlonMY14,MoranSWY15}.

\section*{Acknowledgements}
We thank Paul Beame, Eli Ben-Sasson, Yuval Filmus, Ariel Gabizon, Sivaramakrishnan Natarajan Ramamoorthy, Anup Rao, Mert Sa$\mathrm{\breve{g}}$lam, and Amir Yehudayoff for helping us improve the presentation of the paper.

\newpage
\bibliographystyle{alpha}
\bibliography{sauersbib} 
\newpage
\begin{appendix}

\section{Proof of Smolensky's Lower bound}\label{app:smolensky}

For completeness, we provide a proof of  Smolensky's Theorem~\ref{smolensky}.       We introduce the following notation that will be convenient.   For any set $C \subseteq \{0, 1\}^n$ and degree $d \in [n]$ let $\edc$ denote the evaluation (or restriction) mapping 
\[\edc : \F[x_1,\ldots,x_n]^{\leq d} \to \{f:C \to \F\},\]
where  $\F[x_1,\ldots,x_n]^{\leq d}$ denotes polynomials with degree at most $d$ over $\F$.  Note that $\edc$ is a linear map and  $h_d(C,\F) = \rk_\F(\edc)$.  Associate the map $\edc$ with a matrix such that the rows correspond to elements of $C$, the columns correspond to degree $d$ monomials, and entries are evaluations. 

\subsection{Proof of Theorem~\ref{smolensky}}
Smolensky's Theorem follows from two lemmas.  We assume all ranks are over $\F$ for convenience.  

\begin{lemma}\label{symmetric-difference}For any $S, T \subseteq \{0,1\}^n$ and any $d \in [n]$ we have
$ |S \setminus T| \geq h_d(S,\F) - h_d(T,\F).$
\end{lemma}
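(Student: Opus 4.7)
My plan is to bound $|S\setminus T|$ from below by comparing evaluation maps on $S$, $T$, and $S\cup T$, using rank-nullity and the obvious monotonicity of the Hilbert function.

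Let $V = \F[x_1,\ldots,x_n]^{\leq d}$ and write $K_T = \ker(\mathsf{E}^{d,T})$, i.e.\ the subspace of polynomials of degree at most $d$ that vanish on $T$. The crucial observation is that $\mathsf{E}^{d,S}$ restricted to $K_T$ has image contained in $\{f : S\to\F : f|_{S\cap T} = 0\}$, since any $p\in K_T$ vanishes on $S\cap T$. This target space has dimension exactly $|S\setminus T|$, so
\[
\dim\bigl(\mathsf{E}^{d,S}(K_T)\bigr) \;\leq\; |S\setminus T|.
\]

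Next I would compute the left-hand side by rank-nullity. The kernel of $\mathsf{E}^{d,S}$ restricted to $K_T$ is $K_S\cap K_T = \ker(\mathsf{E}^{d,S\cup T})$, so
\[
\dim\bigl(\mathsf{E}^{d,S}(K_T)\bigr) \;=\; \dim(K_T) - \dim(K_S\cap K_T) \;=\; h_d(S\cup T,\F) - h_d(T,\F),
\]
using $\dim(K_T) = \dim(V) - h_d(T,\F)$ and $\dim(\ker \mathsf{E}^{d,S\cup T}) = \dim(V) - h_d(S\cup T,\F)$.

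Finally I would invoke monotonicity: $h_d(S,\F)\leq h_d(S\cup T,\F)$, because any degree-$\leq d$ polynomial identity valid on $S\cup T$ is in particular valid on $S$, so $\ker(\mathsf{E}^{d,S})\supseteq \ker(\mathsf{E}^{d,S\cup T})$. Combining everything,
\[
|S\setminus T| \;\geq\; h_d(S\cup T,\F) - h_d(T,\F) \;\geq\; h_d(S,\F) - h_d(T,\F),
\]
as required. There is no real obstacle here; the only care needed is to track the three evaluation maps and apply rank-nullity to the restriction of $\mathsf{E}^{d,S}$ to $K_T$ rather than trying to relate $h_d(S,\F)$ and $h_d(T,\F)$ directly.
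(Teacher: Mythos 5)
Your proof is correct. It differs from the paper's in flavor, though both are short linear-algebra arguments on the evaluation maps. The paper partitions the rows of $\eds$ into the block $A$ indexed by $S\cap T$ and the block $B$ indexed by $S\setminus T$, then uses subadditivity of rank under stacking, $\rk\left(\begin{smallmatrix}A\\ B\end{smallmatrix}\right)\le \rk(A)+\rk(B)$, together with $\rk(A)\le h_d(T,\F)$ (as $A$ is a submatrix of $\edt$) and $\rk(B)\le |S\setminus T|$. Your argument is essentially the kernel-side dual: you restrict $\mathsf{E}^{d,S}$ to $K_T=\ker(\mathsf{E}^{d,T})$, identify the kernel of that restriction as $K_{S\cup T}$, and apply rank–nullity, which introduces $h_d(S\cup T,\F)$ as an intermediary that you then discharge by monotonicity. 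Both proofs are sound and of comparable length; the paper's avoids the detour through $S\cup T$ and stays at the level of concrete matrix blocks, while yours is phrased intrinsically in terms of the ideal of polynomials vanishing on $T$, which is conceptually cleaner and makes the role of $K_T$ explicit. One small remark: the paper's set $C=\mathsf{E}^{d,T\setminus S}$ plays no essential role there either (only $\rk(A)\le \rk\left(\begin{smallmatrix}A\\ C\end{smallmatrix}\right)=h_d(T,\F)$ is used), so the two proofs are in fact using precisely the same pieces of data, just organized through images versus kernels.
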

\begin{proof}  Notice that $\eds$ and $\edt$ have identical submatrices induced by the $S \cap T$ rows.  Denote the submatrices as $A = \mathsf{E}^{d,S\cap T}$ and $B = \mathsf{E}^{d,S\setminus T}$ and $C = \mathsf{E}^{d,T\setminus S}$.  We want to prove 
\begin{eqnarray}  \rk(B) \geq \rk \left( \begin{array}{c} A  \\ B  \end{array} \right ) -   \rk \left( \begin{array}{c} A \\ C  \end{array} \right ). \label{eqn1}
\end{eqnarray}
 Equation~\ref{eqn1} actually holds for any three matrices, following from the bounds 
\[\rk \left( \begin{array}{c} A  \\ B  \end{array} \right ) \leq \rk(A) +\rk(B) \ \ \ \ \ \ \mathrm{ and } \ \ \ \ \ \ \rk \left( \begin{array}{c} A  \\ C  \end{array} \right ) \geq \rk(A).  \] 
The fact that $\rk(B) = \rk(\mathsf{E}^{d,S\setminus T}) \leq |S \setminus T|$ concludes the proof.
\end{proof}


\begin{lemma}[\cite{smolensky}]\label{lemma2} Let $p:\bitsz \to \F$ and define  $P = \{x  :  p(x) \neq 0\}$.   If $d < (n - \deg(p))/2$ then 
\[ \rk(\mathsf{E}^{d,P}) \leq |P|/2. \] 
\end{lemma}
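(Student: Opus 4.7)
The plan is to exhibit a non-degenerate symmetric bilinear form $B$ on $\F^P$ under which the image $V_d|_P$ of $\mathsf{E}^{d,P}$ is self-orthogonal. Since any self-orthogonal subspace $W$ of an $N$-dimensional space equipped with a non-degenerate symmetric bilinear form satisfies $\dim W \leq N/2$ (because $W \subseteq W^\perp$ and $\dim W + \dim W^\perp = N$), this immediately yields $h_d(P,\F) = \dim V_d|_P \leq |P|/2$, which is the desired conclusion.

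I would define $B(u,v)$ to be the coefficient of the top monomial $x_1 x_2 \cdots x_n$ in the multilinear reduction of $p \cdot \tilde u \cdot \tilde v$, where $\tilde u, \tilde v$ are arbitrary multilinear polynomial lifts of $u, v \in \F^P$ to all of $\{0,1\}^n$. To check that $B$ is well-defined, observe that replacing $\tilde u$ by $\tilde u + w$ with $w|_P = 0$ changes the product by $p \cdot w \cdot \tilde v$; but $p \cdot w$ vanishes identically on $\{0,1\}^n$ (since $p$ vanishes off $P$ and $w$ vanishes on $P$), so $pw = 0$ as a multilinear polynomial and $B$ is unaffected. Bilinearity and symmetry are then immediate. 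For self-orthogonality of $V_d|_P$: any $u \in V_d|_P$ admits a lift $\tilde u$ of multilinear degree at most $d$, and similarly for $v$, so the multilinear reduction of $p \cdot \tilde u \cdot \tilde v$ has degree at most $\deg(p) + 2d < n$ by hypothesis, and therefore has no $x_{[n]}$-term.

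The step that I expect to require the most care is non-degeneracy of $B$. Suppose $B(u, v) = 0$ for every $v \in \F^P$. Since every multilinear polynomial $R$ on $\{0,1\}^n$ arises as a lift of some $v \in \F^P$ (namely $v = R|_P$), and since $B(u, v)$ is independent of the chosen lift, this forces the coefficient of $x_{[n]}$ in $(p \tilde u) \cdot R$ to vanish for every multilinear $R$. Taking $R = x_S$ and writing $p \tilde u = \sum_T c_T x_T$ in the multilinear basis, this coefficient equals $\sum_{T \supseteq [n] \setminus S} c_T$ (using $x_T \cdot x_S = x_{T \cup S}$); requiring this to vanish for every $S \subseteq [n]$ is a triangular Möbius-type system that forces every $c_T = 0$. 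Hence $p \tilde u$ is the zero multilinear polynomial, so $p(x)\tilde u(x) = 0$ for every $x \in \{0,1\}^n$; since $p$ is nonzero on $P$, this forces $u = \tilde u|_P = 0$, proving $B$ is non-degenerate. Combining non-degeneracy with the self-orthogonality of $V_d|_P$ yields $h_d(P, \F) \leq |P|/2$, independently of the characteristic of $\F$.
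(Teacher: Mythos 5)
Your proof is correct, but it takes a genuinely different route from the paper's. The paper argues by contradiction: assuming $\rk(\mathsf{E}^{d,P}) = r > |P|/2$, it selects an $r\times r$ full-rank submatrix, uses the fact that the complementary row block has fewer than $r$ rows to build a degree-$\le d$ polynomial $q_2$ vanishing on those rows but not on some $x^*$ in the chosen block, then uses full rank again to build $q_1$ with $q_1q_2$ a point indicator for $x^*$; since $pq_1q_2$ is then supported on a single point it must have degree $n$, contradicting $\deg(p)+2d < n$. You instead equip $\F^P$ with the bilinear form $B(u,v) = [x_{[n]}]\,\mathrm{mlr}(p\,\tilde u\,\tilde v)$, check it is well-defined and symmetric, observe that the degree-$\le d$ restrictions form a self-orthogonal subspace (same degree arithmetic), and prove non-degeneracy via the M\"obius-type triangular system, yielding the $|P|/2$ bound from $\dim W + \dim W^\perp = |P|$. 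The mathematical engine is the same (products with $p$ cannot reach degree $n$, and a point indicator/top coefficient detects that), but your version packages the contradiction as non-degeneracy of a form, which is more conceptual and avoids the explicit submatrix manipulations. The trade-off is that most of the work shifts into verifying non-degeneracy, which you handle correctly; the paper's approach is more hands-on but arguably shorter. Both are valid over any field and give the same bound.
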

\begin{proof} Assume for contradiction that $\rk(\mathsf{E}^{d,P}) = r > |P|/2$.  
Let $M_1$ be an $r \times r$ full-rank sub-matrix.  Our goal is to find two degree $d$  polynomials $q_1$ and $q_2$ such that the product $q_1q_2$ is the indicator function for the first row in $M_1$.  
Assume without loss of generality the matrix $\mathsf{E}^{d,P}$ looks like 
\[\left( \begin{array}{c|c} M_1 & \cdots \\\hline M_2 & \cdots \end{array}\right )\]
Let $Q_1$ and $Q_2$ denote the rows corresponding to $M_1$ and $M_2$, respectively.  We start by constructing $q_2$ to be zero on $Q_2$.  Since the rank satisfies $r > |P|/2$ we know $M_2$ has more columns than rows, and thus we can find a vector $v_2$ such that $\mathsf{E}^{d,P}v_2$ is zero on $Q_2$.  Since $M_1$ has full-rank, we know that $\mathsf{E}^{d,P}v_2$ is nonzero on $Q_1$.  Notice that for any $v \in \F^{|P|}$ the vector $\mathsf{E}^{d,P}v$  corresponds to a degree $d$ polynomial.   Let $q_2$ be the polynomial corresponding to $v_2$. 

Since $M_1$ has full rank, we can find a vector $v_1$ and corresponding polynomial $q_1$ such that $q_1q_2$ is the indicator function for the first row of $Q_1$.  To conclude, notice that $pq_1q_2$ interpreted as a function on $\{0,1\}^n$ is nonzero on only a single point.  By inspection this means $\deg(pq_1q_2) = n$, contradicting the assumption that its degree is at most $2d + \deg(p) < n$. 

\end{proof}

With the above lemmas in hand, we can prove Smolensky's theorem.  
\begin{proof}[Proof of Theorem~\ref{smolensky}]  Let $F = f^{-1}(0)$ and $P = \{x  :  p(x) \neq 0\}$ 
 and $d = (n-\deg(p)-1)/2$.  Notice \[\Pr_x[p(x) \neq f(x)] \geq |F\triangle P|/2^{n}.\]  We show $|F\triangle P| \geq 2h_d(F,\F)- |F|.$  Indeed, this follows from the above two lemmas:
\begin{align*}
|F\setminus P| &\geq h_d(F,\F)-h_d(P,\F)\tag{By Lemma~\ref{symmetric-difference}}\\
                        &\geq h_d(F,\F)-\frac{|P|}{2}\tag{Lemma~\ref{lemma2} implies $h_d(P,\F)\leq\frac{|P|}{2}$}\\
                        &= h_d(F,\F)-\frac{|F|+|P\setminus F|-|F\setminus P|}{2}.\tag{$|P| = |F|+|P\setminus F|-|F\setminus P|$.} 
\end{align*}
Therefore, by rearranging the above expressions, we see that 
$$\frac{1}{2}|F \triangle P| \geq h_d(F,\F)- \frac{|F|}{2}.$$
 \end{proof}

\section{Downward-closed Bases from Shifting}\label{app:shifting}
In this section, we prove Theorem~\ref{thm:shifting}, which says that $\{x_I : I\in \setof{D}\}$
is a basis for the vector space of functions $\{f:C\rightarrow \F\}$,  where $D=S_{n}(S_{n-1}(\ldots S_{1}(C)))$. The proof of Theorem~\ref{thm:shifting} is by induction. For the induction to work we prove a stronger statement. 
Consider the lexicographical order on the set of all multilinear monomials.
That is, $m_1 < m_2$ if the smallest $i$ such that $x_i$ appears in exactly one of $m_1,m_2$ appears in $m_2$. 
For example,
$x_1 > x_2x_3x_4\ldots x_{n}$.

We are now ready to state the stronger statement we will prove.
\begin{lemma}\label{lem:leading_monomials}
Let $p$ be a polynomial where $x_I$ is the leading monomial of $p$. Then
 $$p|_C=0 \implies I\notin \setof{D}.$$
\end{lemma}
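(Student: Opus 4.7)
The plan is to proceed by induction on $n$; the base case $n=0$ is vacuous. For the inductive step, split along the first variable: write $p = p_0(x_2,\ldots,x_n) + x_1 \cdot p_1(x_2,\ldots,x_n)$ and set $C_\alpha := \{c' \in \{0,1\}^{n-1} : (\alpha,c') \in C\}$ for $\alpha \in \{0,1\}$. Under the given lex order, every monomial containing $x_1$ strictly exceeds every monomial that does not (they first differ at index $1$), so if $1 \in I$ then $p_1$ is nonzero and its leading monomial is $x_{I \setminus \{1\}}$, whereas if $1 \notin I$ then $p_1 \equiv 0$ (otherwise the leading monomial of $p$ would contain $x_1$).

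The next step is a recursion for $D$. A direct inspection of $S_1$ gives
\[
\{c' : (0,c') \in S_1(C)\} = C_0 \cup C_1 \quad\text{and}\quad \{c' : (1,c') \in S_1(C)\} = C_0 \cap C_1.
\]
Since the subsequent shifts $S_2,\ldots,S_n$ only move elements within each layer $\{c_1 = 0\}$ and $\{c_1 = 1\}$, they act independently on the two sides, giving $D = (\{0\} \times D_0) \cup (\{1\} \times D_1)$, where $D_0 := S_n \circ \cdots \circ S_2(C_0 \cup C_1)$ and $D_1 := S_n \circ \cdots \circ S_2(C_0 \cap C_1)$ are downward-closed families in $\{2,\ldots,n\}$. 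Identified with subsets of $[n]$, this reads $\setof{D} = \setof{D_0} \cup \bigl\{\{1\} \cup J : J \in \setof{D_1}\bigr\}$.

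A two-case argument then finishes the proof. If $1 \in I$, then for any $c' \in C_0 \cap C_1$ both $(0,c')$ and $(1,c')$ lie in $C$, forcing $p_0(c') = 0$ and $p_0(c') + p_1(c') = 0$; thus $p_1$ vanishes on $C_0 \cap C_1$, and the inductive hypothesis applied in the $n-1$ variables $x_2,\ldots,x_n$ gives $I \setminus \{1\} \notin \setof{D_1}$, i.e.\ $I \notin \setof{D}$. If $1 \notin I$, then $p = p_0$ does not involve $x_1$ and still vanishes on $C$, so $p_0$ vanishes on $C_0 \cup C_1$; the inductive hypothesis then yields $I \notin \setof{D_0}$, and since $1 \notin I$ this is exactly $I \notin \setof{D}$. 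I expect the main subtlety to lie in pinning down the recursion for $D$ — in particular, verifying that $S_1$ sends the pair $(C_0, C_1)$ to $(C_0 \cup C_1,\, C_0 \cap C_1)$ and that the remaining shifts commute with this layered decomposition.
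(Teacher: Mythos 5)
Your proof is correct, but it takes a genuinely different route than the paper. The paper splits $C$ along the \emph{last} coordinate $x_n$, i.e.\ writes $C = C_{n=0} \sqcup C_{n=1}$, applies the first $n-1$ shifts layerwise (via the locality identity), and defines $D_b$ as the shifts of the two layers. Because $x_n$ is the \emph{least} significant variable in the lex order, restricting $x_n$ to a constant can shuffle coefficients in a way that obscures the leading monomial, so the paper is forced into a two-subcase coefficient analysis ($\alpha_{I\setminus\{n\}}+\alpha_I \neq 0$ vs.\ $=0$) when $n \in I$. You instead split along the \emph{first} coordinate $x_1$, apply $S_1$ immediately to obtain the exact layer identity $(C_0,C_1) \mapsto (C_0\cup C_1,\,C_0\cap C_1)$, and then push $S_2,\ldots,S_n$ through the layers. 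Since $x_1$ is the \emph{most} significant variable, the decomposition $p = p_0 + x_1 p_1$ cleanly separates the leading-monomial information: $1 \in I$ iff $p_1 \not\equiv 0$, and in each case the relevant restricted polynomial ($p_1$ on $C_0 \cap C_1$, or $p_0$ on $C_0 \cup C_1$) has leading monomial exactly $x_{I\setminus\{1\}}$ or $x_I$ with no coefficient bookkeeping. You also obtain the stronger \emph{exact} recursion $\setof{D} = \setof{D_0} \cup \{\,\{1\}\cup J : J \in \setof{D_1}\,\}$ (a disjoint union), whereas the paper only uses the one-directional membership implications it needs. The net effect is a cleaner two-case argument with the same inductive skeleton; the price is the small extra work of verifying that $S_1$ acts as union/intersection on the layers and that subsequent shifts respect the layering, which you correctly flag as the main subtlety.
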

\noindent
The above lemma implies
that there is no linear combination of monomials in $\{x_I:I\in \setof{D}\}$
which represents the zero function on $C$ (because the leading monomial in such a combination will be in $\setof{D}$ which contradicts the lemma). Since down-shifting $C$ does not change its cardinality, it follows that $|C|=|\{x_I:I\in\setof{D}\}|$ and therefore $\{x_I:I\in \setof{D}\}$ is indeed a basis. 

In the proof we will use the following ``locality'' of down-shifts. 
For $i \in [n]$ define the two sets 
\[ C_{i=0} = \{c\in C : c_i=0\}, \ \ \ \ \ \  \mathrm{ and } \ \ \ \ \ \  C_{i=1} = \{c\in C : c_i=1\}.\]
For all $i,j \in [n]$ with $i\neq j$, the shifting operator $S_j$ satisfies  
\begin{equation}~\label{eq:local}
S_j(C) = S_j(C_{i=0})\cup S_j(C_{i=1}).
\end{equation}
Putting it differently: $C_{i=0},C_{i=1}$ are invariant under $S_j$ when $i\neq j$.
\begin{proof}[Proof of Lemma~\ref{lem:leading_monomials}]
Let $C_0 = \{c\in C : c_n=0\}$, and $C_1 = \{c\in C : c_n=1\}$.
By the locality of down-shifts (Equation~\ref{eq:local}):
$$S_{n-1}(S_{n-2}(\ldots S_1(C))) = S_{n-1}(S_{n-2}(\ldots S_2(C_0)))\cup S_{n-1}(S_{n-2}(\ldots S_2(C_1))).$$
For $b\in\{0,1\}$, let \[D_b=\{w|_{\{1,2,\ldots,n-1\}}:w\in S_{n-1}(S_{n-2}(\ldots S_1(C_b)))\}.\]
By the induction hypothesis applied on $C_b|_{\{1,\ldots,n-1\}}$, 
there are no polynomials $p(x_1,\ldots,x_{n-1})$ that represent the zero function on $C_b$ whose leading monomial is $x_I$ where $I\in \setof{D_{b}}$.

Assume towards contradiction some multilinear polynomial $p(x_1,\ldots,x_n)$ represents the zero function on $C$ and has leading monomial of $x_I$ with $I\in D$. 
A crucial observation which follows directly from the definition of the down-shift $S_n$ is that if $n\in I$ then $(I\setminus\{n\})$ belongs to $\setof{D_0}$ {\bf and} to $\setof{D_1}$, and if $n\notin I$ then $I$ belongs to $\setof{D_0}$ {\bf or} to $\setof{D_1}$.
Consider the expansion 
$$p=\alpha_I x_I + \sum_{J < I}{\alpha_J x_J},$$
where $J < I$ in the lexicographical monomial ordering.  We distinguish between two cases.

(i) $n\notin I$. Assume that $I\in D_1$ (the proof for $I \in D_0$ is symmetric). Notice that since $x_I$ is the leading monomial, the monomial $x_{I\cup\{n\}}$ does not appear in $p$. Therefore, the polynomial that is obtained by setting $x_n=1$ in $p$ has leading monomial $x_I$ and represents the zero function on $C_1$, contradicting the induction hypothesis.

(ii) $n\in I$. Here we distinguish between two subcases. If $\alpha_{I\setminus\{n\}}+\alpha_I\neq 0$ then the polynomial that is obtained by setting $x_n=1$ in $p$ has leading monomial $x_{I\setminus\{n\}}$ and represents the zero function on $C_1$, contradicting the induction hypothesis.
If $\alpha_{I\setminus\{n\}}+\alpha_I = 0$ then the polynomial that is obtained by setting $x_n=0$ in $p$ has leading monomial $x_{I\setminus\{n\}}$ and represents the zero function on $C_0$, contradicting the induction hypothesis.
\end{proof}

\section{Hyperplane Arrangement Lemmas}\label{app:hyplerplanes}
We now prove the two lemmas that we assumed in the proof of Proposition~\ref{thm:hyp-extremal}.  Recall $\mathcal{H}$ is in general position if any $k\leq d$ hyperplanes intersect in a $(d-k)$-dimensional affine subspace. This means (i) every $d+1$ hyperplanes have an empty intersection, and (ii) every subset of $d$ normal vectors are linearly independent. (if some $d$ normal vectors are linearly dependent then their intersection  is either empty or infinite but not $0$ dimensional).

\begin{lemma}
Let $\mathcal{H}$ be an arrangement in $\R^d$.
If $Y\subseteq [m]$ is shattered by $S_\mathcal{H}$
then the hyperplanes $h_i$ for $i\in Y$ have a non-empty intersection.
In particular, if $\mathcal{H}$ is in general position then $|Y|\leq d$.
\end{lemma}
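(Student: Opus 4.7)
The plan is to rephrase the shattering hypothesis as a statement about an affine image in $\R^Y$, and then invoke a separating hyperplane argument to force the image to contain the origin.

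Define the affine map $\phi: \R^d \to \R^Y$ by $\phi(x)_i = \langle \vec n_i, x\rangle - b_i$ for $i \in Y$, and let $A = \phi(\R^d) \subseteq \R^Y$; this is an affine subspace. The hypothesis that $Y$ is shattered by $S_\mathcal{H}$ says that for every sign pattern $\epsilon \in \{-,+\}^Y$ there exists $x \in \R^d$ realizing these signs on the affine forms indexed by $Y$; this is equivalent to $A$ meeting every one of the $2^{|Y|}$ open orthants of $\R^Y$. On the other hand, $\bigcap_{i \in Y} h_i \neq \emptyset$ is exactly the condition that $0 \in A$. So the lemma reduces to the purely geometric claim: \emph{an affine subspace of $\R^Y$ meeting every open orthant must contain the origin}.

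I would prove this geometric claim by contradiction. If $0 \notin A$, then since $A$ is closed and convex and $\{0\}$ is compact, the strict separating hyperplane theorem produces a nonzero linear functional $L(y) = \sum_{i\in Y} \lambda_i y_i$ and a constant $c > 0$ such that $L(a) \geq c$ for every $a \in A$. Now pick the orthant $O_\epsilon$ with $\epsilon_i = -\sign(\lambda_i)$ whenever $\lambda_i \neq 0$ (and arbitrarily otherwise). For any $y \in O_\epsilon$ the term $\lambda_i y_i$ is non-positive for every $i$ and strictly negative for at least one $i$ (since $L$ is nonzero), so $L(y) < 0$. But $A$ meets $O_\epsilon$ by hypothesis, contradicting $L \geq c > 0$ on $A$. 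Hence $0 \in A$, yielding the first conclusion: the hyperplanes $h_i$, $i \in Y$, share a common point.

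The ``in particular'' clause is then immediate from general position: any $d+1$ hyperplanes from $\mathcal{H}$ have empty common intersection, so if $|Y| > d$ then picking any $(d+1)$-subset $Y' \subseteq Y$ gives $\bigcap_{i \in Y} h_i \subseteq \bigcap_{i \in Y'} h_i = \emptyset$, contradicting the first part. Hence $|Y| \leq d$. The main step to get right is the geometric lemma about affine subspaces and open orthants; once it is isolated, the translation from the combinatorial shattering statement and the derivation of $|Y| \leq d$ are routine bookkeeping.
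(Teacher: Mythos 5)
Your proof takes essentially the same route as the paper's: both define the affine map $T_Y(\vec x) = M_Y\vec x - \vec b_Y$ (your $\phi$), translate shattering into the statement that the affine image meets every open orthant, and conclude that the image must contain the origin, with the ``in particular'' clause following from the empty-intersection property of general position. The only difference is the final step: the paper picks one representative per orthant, forms their convex hull inside the image, and asserts as an exercise that this hull contains $0$, whereas you argue the contrapositive directly via the strict separating hyperplane theorem, thereby spelling out the detail the paper leaves implicit.
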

\begin{proof}
Let $M_Y$ be the matrix whose rows are the normals $\vec n_i,i\in Y$,
and let $\vec b_Y$ be the vector $(b_i)_{i\in Y}$.
Consider the affine transformation $T_Y:\R^d\rightarrow\R^{Y}$ defined by
$$T_Y(\vec x) = M_Y\cdot \vec x - \vec b_Y.$$
We need to show that there is some $\vec x\in \cap_{i\in Y}h_i$.
Note that $\vec x\in \cap_{i\in Y}$ if and only if $T_Y(\vec x) = \vec 0$.
Thus, it is enough to show that $\vec 0$ is in the image of $T_Y$.
Now, the fact that $S_\mathcal{H}$ shatters $Y$ amounts to that
for every sign vector $\vec s\in\{-1,1\}^Y$ there exists $\vec x\in\R^m$
such that $\sign(T_Y(\vec x)) = \vec s$. In other words, every orthant contains a vector $u$ in the image of $T_Y$. Now, the convex hull, $C$, of these vectors is also contained in the image
of $T_Y$ (since the image of an affine transformation is convex). Also, since these vectors contain a vector in every orthant, it follows  that $\vec 0\in C$, and therefore $\vec 0$ is in the image of $T_Y$.  Indeed, an exercise shows that any set of vectors containing a vector in each orthant also contains the origin in its convex hull.
\end{proof}
\begin{lemma}
Let $\mathcal{H}$ be an arrangement in $\R^d$ that is in general position.
If the normals $\vec n_i, i\in Y$ are linearly independent
then $Y$ is strongly shattered by $S_\mathcal{H}$.
In particular, any subset of size at most $d$ is strongly shattered by $S_\mathcal{H}$.
\end{lemma}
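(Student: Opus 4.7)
The plan is to exhibit an explicit cell of the arrangement witnessing strong shattering: a sign pattern $\bar{s} \in \{-1,1\}^{[m]\setminus Y}$ such that, for every extension $s \in \{-1,1\}^{[m]}$ of $\bar{s}$, the system encoded by $s$ is feasible. The geometric intuition is to pick a generic point $p$ in the common intersection $\bigcap_{i\in Y} h_i$ and wiggle slightly to realize each of the $2^{|Y|}$ sign patterns on $Y$ while not crossing any $h_j$ with $j \notin Y$.

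First I would verify that $F := \bigcap_{i\in Y} h_i$ is a non-empty affine subspace of dimension $d-|Y|$: linear independence of the normals $\{\vec{n}_i : i \in Y\}$ makes the associated affine system consistent, with solution set of the stated dimension. Then I would choose $p \in F$ lying off every other hyperplane, i.e.\ $p \notin h_j$ for all $j \notin Y$. This choice is possible because, by general position of $\mathcal{H}$, each $h_j$ meets $F$ in an affine subspace of strictly smaller dimension (empty when $|Y|=d$, and of dimension $d-|Y|-1$ otherwise, using that $\{\vec{n}_i : i \in Y \cup \{j\}\}$ is linearly independent for any $|Y|<d$, $j\notin Y$), so $F\setminus\bigcup_{j\notin Y}h_j$ is a dense open subset of $F$. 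Define $\bar{s}_j := \sign(\langle \vec{n}_j,p\rangle - b_j)$ for each $j \notin Y$; by construction each such $\bar{s}_j \in \{-1,1\}$.

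Next, consider the affine map $T_Y : \R^d \to \R^Y$ defined by $T_Y(x)_i = \langle \vec{n}_i,x\rangle - b_i$. Linear independence of $\{\vec{n}_i : i\in Y\}$ means the linear part $L_Y$ is surjective, and $T_Y(p)=0$. For each target sign pattern $s \in \{-1,1\}^Y$, pick $v_s \in \R^Y$ with $\sign(v_s)=s$ and a preimage $w_s \in \R^d$ with $L_Y(w_s)=v_s$; then $T_Y(p+\varepsilon w_s) = \varepsilon v_s$ for every $\varepsilon>0$, realizing sign pattern $s$ on $Y$. Finally, since $\sign(\langle \vec{n}_j,p\rangle - b_j)=\bar{s}_j\neq 0$ for $j \notin Y$, continuity ensures that for $\varepsilon$ small enough (and taking a minimum over the finitely many $s$ and $j$) the signs on $[m]\setminus Y$ are unchanged at $p+\varepsilon w_s$. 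This exhibits a feasible point for every extension of $\bar{s}$, so $Y \in \sstr(S_{\mathcal{H}})$.

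For the ``in particular'' statement, general position forces every $d$ normals to be linearly independent; hence any subset of size at most $d$ has independent normals, and the first part applies. The main thing to be careful about is the existence of the generic point $p$: I need to use general position to rule out both the case $h_j \supseteq F$ and the case that $F$ itself might be a single point coinciding with some $h_j$, and both are handled by the dimension count above. The rest is routine surjectivity plus continuity.
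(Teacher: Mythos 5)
Your proof is correct and follows essentially the same perturbation argument as the paper: choose a point on $\bigcap_{i\in Y} h_i$ avoiding every other hyperplane, use surjectivity of the linear map given by the $Y$-normals to realize each sign pattern on $Y$ by a small displacement, and invoke continuity to keep the signs on $[m]\setminus Y$ fixed. The only (cosmetic) difference is that the paper first reduces to $|Y|=d$ by extending $Y$ so its normals form a basis and using downward-closure of $\sstr$, making the intersection a single point $\vec{x}^*$, whereas you work directly with arbitrary $|Y|\leq d$ and select a generic base point on the $(d-|Y|)$-dimensional flat via a dimension count; both routes are valid.
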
 
\begin{proof}
It is enough to prove only for sets $Y$ such that the normals $\vec n_i, i\in Y$ form a basis to $\R^d$ (because every independent set can be extended to a basis and every subset of a strongly shattered set is strongly shattered).
Let $M=M_{[m]}$ be the matrix whose rows are the normals $\vec n_i,i\in [m]$,
and let $\vec b = \vec b_{[m]}$ be the vector $(b_i)_{i\in [m]}$.
Consider the affine transformation $T:\R^d\rightarrow\R^{m}$ defined by
$$T(\vec x) = M\cdot \vec x - \vec b.$$
Since $\mathcal{H}$ is in general position, there is a unique $\vec x^*$ that lies in the intersection $\cap_{i\in Y}h_i$. In other words, $T(\vec x^*)_i = 0$ for every $i\in Y$.
Moreover, since every $d+1$ hyperplanes have an empty intersection,
it follows that for all $j\notin Y$: $T(\vec x^*)_j\neq 0$.
We show that $Y$ is strongly shattered by showing that for every sign pattern
$s\in\{- 1,1\}^Y$ there exists $\vec u\in\R^d$ such that for all $i\in Y$
$$\sign(T(\vec x^*+\vec u))_i = \vec s_i,$$
and for all $j\notin Y$
$$\sign(T(\vec x^*+\vec u))_j = \sign(\vec x^*)_j.$$
Indeed, since the normals $\vec n_i, i\in Y$ are linearly independent,
there is some $\vec w\in\R^d$ such that $(M\cdot\vec w)_i = s_i$.
Therefore, for every $\eps>0$ and $i\in Y$
$$\sign(T(\vec x^*+\eps\vec w))_i = s_i.$$
Moreover, since for all $j\notin Y$ $T(\vec x^*)_j \neq 0$
then the exists some $\eps'>0$ such that for al $j\notin Y$
$$\sign(T(\vec x^*+\eps'\vec w))_j = \sign(\vec x^*)_j.$$
Thus, picking $\vec u=\eps' \vec w$ finishes the proof.
\end{proof}

\end{appendix}
\end{document}